        \theoremstyle{plain} 
        \newtheorem{theorem}{Theorem}[section]
        \newaliascnt{lemma}{theorem}
        \newtheorem{lemma}[lemma]{Lemma}
        \newaliascnt{proposition}{theorem}
        \newaliascnt{corollary}{theorem}
        \newtheorem{corollary}[corollary]{Corollary}
        \theoremstyle{definition}
        \newtheorem{definition}[theorem]{Definition}
        \newtheorem{example}[theorem]{Example}
        \theoremstyle{remark}
        \newtheorem{remark}{Remark}
        \newtheorem*{acknowledgments}{Acknowledgments}
\begin{document}

\title[Semilinear mixed problems on Hilbert complexes]{Semilinear
  mixed problems on Hilbert complexes\\
and their numerical approximation}

\author{Michael Holst}
\address{Department of Mathematics\\
University of California, San Diego\\
9500 Gilman Dr \#0112\\
La Jolla CA 92093-0112}
\email{\{mholst,astern\}@math.ucsd.edu}

\author{Ari Stern}

\subjclass[2010]{Primary: 65N30; Secondary: 35J91, 47H30}

\begin{abstract}
  Arnold, Falk, and Winther recently showed
  [\emph{Bull. Amer. Math. Soc.} \textbf{47} (2010), 281--354] that
  linear, mixed variational problems, and their numerical
  approximation by mixed finite element methods, can be studied using
  the powerful, abstract language of Hilbert complexes.  In another
  recent article [arXiv:1005.4455], we extended the
  Arnold--Falk--Winther framework by analyzing variational crimes (a
  la Strang) on Hilbert complexes.  In particular, this gave a
  treatment of finite element exterior calculus on manifolds,
  generalizing techniques from surface finite element methods and
  recovering earlier \emph{a priori} estimates for the
  Laplace--Beltrami operator on $2$- and $3$-surfaces, due to Dziuk
  [\emph{Lecture Notes in Math.}, vol.~1357 (1988), 142--155] and
  later Demlow [\emph{SIAM J. Numer. Anal.}, \textbf{47} (2009),
  805--827], as special cases.  In the present article, we extend the
  Hilbert complex framework in a second distinct direction: to the
  study of semilinear mixed problems.  We do this, first, by
  introducing an operator-theoretic reformulation of the linear mixed
  problem, so that the semilinear problem can be expressed as an
  abstract Hammerstein equation.  This allows us to obtain, for
  semilinear problems, \emph{a priori} solution estimates and error
  estimates that reduce to the Arnold--Falk--Winther results in the
  linear case.  We also consider the impact of variational crimes,
  extending the results of our previous article to these semilinear
  problems.  As an immediate application, this new framework allows
  for mixed finite element methods to be applied to semilinear
  problems on surfaces.
\end{abstract}

\date{August 9, 2011}

\maketitle

\clearpage

\tableofcontents

\section{Introduction}
\label{sec:intro}

The goal of this paper is to extend the abstract Hilbert complex
framework of \citet{ArFaWi2010}---which they introduced to analyze
certain linear mixed variational problems and their numerical
approximation by mixed finite elements---to a class of
\emph{semilinear} mixed variational problems.  Additionally, we aim to
analyze variational crimes in this semilinear setting, extending our
earlier analysis of the linear case in \citet{HoSt2010}.

\subsection{Background}

\citet{BrLe1992} originally studied Hilbert complexes as a way to
generalize certain properties of elliptic complexes, particularly the
Hodge decomposition and other aspects of Hodge theory.  More recently,
\citet{ArFaWi2010} showed that Hilbert complexes are also a convenient
abstract setting for mixed variational problems and their numerical
approximation by mixed finite element methods, providing the
foundation of a framework called \emph{finite element exterior
  calculus} (see also~\citep{ArFaWi2006}).  This line of research is
the culmination of several decades of work on mixed finite element
methods, which have long been used with great success in computational
electromagnetics, and which were more recently discovered to have
surprising connections with the calculus of exterior differential
forms, including de~Rham cohomology and Hodge
theory~\citep{Bossavit1988,Nedelec1980,Nedelec1986,GrKo2004}. For this
reason, Hilbert complexes are a natural fit for abstract methods of
this type.

Another recent development in this area has been the analysis of
``variational crimes'' on Hilbert complexes (\citet{HoSt2010}).  By
analogy with Strang's lemmas for variational crimes on Hilbert spaces,
this work extended the estimates of \citet{ArFaWi2010} to problems
where certain conditions on the discretization have been violated.
This framework also allowed for a generalization of several results in
the field of \emph{surface finite element methods}, where a curved
domain is not triangulated exactly, but is only approximated by, e.g.,
piecewise linear or isoparametric elements.  This research area was
initiated with the 1988 article of \citet{Dziuk1988} (see
also~\citet{Nedelec1976}), with growing activity in the 1990s
\citep{Dziuk1991,DeDz1995} and a substantial expansion beginning
around
2001~\citep{Holst2001,Christiansen2002,DeDz2003,DeDzEl2005,DzHu2006,DzEl2007,DeDz2007,Demlow2009}.

Our main motivation for extending the estimates of \citet{ArFaWi2010}
and of \citet{HoSt2010}, from linear to semilinear problems, is to
enable the use of finite element exterior calculus for nonlinear
problems on hypersurfaces, allowing for a complete analysis of the
additional errors due to nonlinearity, as well as those due to surface
approximation.

\subsection{Organization of the paper}

The remainder of the article is structured as follows.  In
\autoref{sec:hilbert} we give a quick overview of abstract Hilbert
complexes and their properties, before introducing the Hodge Laplacian
and the linear mixed problem associated with it.  We then discuss the
numerical approximation of solutions to this problem, summarizing some
of the key results of \citet{ArFaWi2010} on approximation by
subcomplexes, and those of \citet{HoSt2010} on variational crimes.  In
\autoref{sec:semilinear}, we introduce an alternative,
operator-theoretic formalism for the linear problem, which---while
equivalent to the mixed variational formulation---allows for a more
natural extension to semilinear problems, due to its monotonicity
properties.  We then introduce a class of semilinear problems---which
can be expressed in the form of certain nonlinear operator equations,
called abstract Hammerstein equations---prove the well-posedness of
these problems, and establish solution estimates under various
assumptions on the nonlinear part.  In \autoref{sec:approx}, we extend
the \emph{a priori} error estimates of \citet{ArFaWi2010} from linear
problems to the semilinear problems introduced in
\autoref{sec:semilinear}, including improved estimates subject to
additional compactness and continuity assumptions.  Finally, we
generalize the linear variational crimes framework of \citep{HoSt2010}
to this class of semilinear problems.  These last results allow the
linear \emph{a priori} estimates, established in \citep{HoSt2010} for
surface finite element methods using differential forms on
hypersurfaces, to be extended to semilinear problems.

\section{Review of Hilbert complexes and linear mixed problems}
\label{sec:hilbert}

We begin, in this section, by quickly recalling the basic objects of
interest---Hilbert complexes and the abstract Hodge Laplacian---along
with the solution theory for linear mixed problems in this setting.
This provides the background and preparation for semilinear problems,
which will be discussed in the subsequent sections.  The treatment of
this background material will be necessarily brief; we will primarily
follow the approach of \citet{ArFaWi2010}, to which the interested
reader should refer for more detail.\footnote{This is largely a
  condensed version of the background material given in \citet[Section
  2]{HoSt2010}, from which we quote freely.  We include it here in the
  interest of keeping the present paper self-contained, since the
  semilinear theory will depend, to a large degree, on several
  properties and results that have recently been established for the
  linear problem.}  At the end of the section, we will also summarize
the results from \citet{HoSt2010}, analyzing variational crimes for
the linear problem, in preparation for extending these results to the
semilinear case.

\subsection{Basic definitions} First, let us introduce the objects of
study, Hilbert complexes, and their morphisms.

\begin{definition}
  A \emph{Hilbert complex} $ \left( W , \mathrm{d} \right) $ consists
  of a sequence of Hilbert spaces $ W ^k $, along with closed,
  densely-defined linear maps $ \mathrm{d} ^k \colon V ^k \subset W ^k
  \rightarrow V ^{ k + 1 } \subset W ^{ k + 1 } $, possibly unbounded,
  such that $ \mathrm{d} ^k \circ \mathrm{d} ^{k-1} = 0 $ for each
  $k$.
  \begin{equation*}
    \xymatrix{  \cdots \ar[r] & V ^{ k - 1 } \ar[r]^-{ \mathrm{d} ^{k-1} }
      & V ^k \ar[r] ^-{ \mathrm{d} ^k } & V ^{ k + 1 } \ar[r] & \cdots }
  \end{equation*}
  This Hilbert complex is said to be \emph{bounded} if $ \mathrm{d} ^k
  $ is a bounded linear map from $ W ^k $ to $ W ^{ k + 1 } $ for each
  $k$, i.e., $ \left( W , \mathrm{d} \right) $ is a cochain complex in
  the category of Hilbert spaces.  It is said to be \emph{closed} if
  the image $ \mathrm{d} ^k V ^k $ is closed in $ W ^{ k + 1 } $ for
  each $k$.
\end{definition}

\begin{definition}
  Given two Hilbert complexes, $ \left( W, \mathrm{d} \right) $ and $
  \left( W ^\prime , \mathrm{d} ^\prime \right) $, a \emph{morphism of
    Hilbert complexes} $ f \colon W \rightarrow W ^\prime $ consists
  of a sequence of bounded linear maps $ f ^k \colon W ^k \rightarrow
  W ^{\prime k} $ such that $ f ^k V ^k \subset V ^{ \prime k } $ and
  $ \mathrm{d} ^{\prime k} f ^k = f ^{ k + 1 } \mathrm{d} ^k $ for
  each $k$.  That is, the following diagram commutes:
  \begin{equation*}
    \xymatrix@=3em{
      \cdots \ar[r] & V ^k \ar[d]^{f ^k } \ar[r]^{\mathrm{d}^k} &
      V ^{k+1} \ar[d]^{f ^{k+1}} \ar[r] & \cdots \\
      \cdots \ar[r] & V ^{\prime k} \ar[r]^{\mathrm{d} ^{\prime k}}
      & V ^{\prime k+1} \ar[r] & \cdots 
    }
  \end{equation*}
\end{definition}

By analogy with cochain complexes, it is possible to define notions of
cocycles, coboundaries, and harmonic forms for Hilbert complexes.
(This also gives rise to a cohomology theory for Hilbert complexes.)

\begin{definition}
  Given a Hilbert complex $ \left( W, \mathrm{d} \right) $, the space
  of \emph{$k$-cocycles} is the kernel $ \mathfrak{Z} ^k = \ker
  \mathrm{d} ^k $, the space of \emph{$k$-coboundaries} is the image $
  \mathfrak{B} ^k = \mathrm{d} ^{ k - 1 } V ^{ k - 1 } $, and the
  \emph{$k$th harmonic space} is the intersection $ \mathfrak{H} ^k =
  \mathfrak{Z} ^k \cap \mathfrak{B} ^{k \perp} $.
\end{definition}

In general, the differentials $ \mathrm{d} ^k $ of a Hilbert complex
may be unbounded linear maps.  However, given an arbitrary Hilbert
complex $ \left( W, \mathrm{d} \right) $, it is always possible to
construct a bounded complex having the same domains and maps, as
follows.

\begin{definition}
  Given a Hilbert complex $ \left( W, \mathrm{d} \right) $, the
  \emph{domain complex} $ \left( V, \mathrm{d} \right) $ consists of
  the domains $ V ^k \subset W ^k $, endowed with the graph inner
  product
\begin{equation*}
  \left\langle u, v \right\rangle _{ V ^k } = \left\langle u , v
  \right\rangle _{ W ^k } + {\langle \mathrm{d} ^k u, \mathrm{d}
    ^k v \rangle} _{ W ^{ k + 1 } } .
\end{equation*}
\end{definition}

\begin{remark}
  Since $ \mathrm{d} ^k $ is a closed map, each $ V ^k $ is closed
  with respect to the norm induced by the graph inner product.  Also,
  each map $ \mathrm{d} ^k $ is bounded, since
  \begin{equation*}
    {\lVert \mathrm{d} ^k v \rVert} _{V^{k+1}} = {\lVert \mathrm{d}
      ^k v \rVert} _{W^{k+1}} \leq \left\lVert v \right\rVert _{ W ^k
    } + {\lVert \mathrm{d} ^k v \rVert} _{W^{k+1}} = \left\lVert
      v \right\rVert _{ V ^k } .
  \end{equation*}
  Thus, the domain complex is a bounded Hilbert complex; moreover, it
  is a closed complex if and only if $ \left( W, \mathrm{d} \right) $
  is closed.
\end{remark}

\begin{example}
  Perhaps the most important example of a Hilbert complex arises from
  the de~Rham complex $ \left( \Omega (M), \mathrm{d} \right) $ of
  smooth differential forms on an oriented, compact, Riemannian
  manifold $M$, where $ \mathrm{d} $ is the exterior derivative.
  Given two smooth $k$-forms $u, v \in \Omega ^k (M) $, the $ L ^2
  $-inner product is defined by
  \begin{equation*}
    \left\langle u, v \right\rangle _{ L ^2 \Omega (M) } = \int _M u
    \wedge \star v = \int _M \left\langle \! \left\langle u, v
      \right\rangle \! \right\rangle  \mu ,
  \end{equation*}
  where $ \star $ is the Hodge star operator associated to the
  Riemannian metric, $ \left\langle \! \left\langle \cdot, \cdot
    \right\rangle \! \right\rangle $ is the metric itself, and $\mu$
  is the Riemannian volume form.  The Hilbert space $ L ^2 \Omega ^k
  (M) $ is then defined, for each $k$, to be the completion of $
  \Omega ^k (M) $ with respect to the $ L ^2 $-inner product.  One can
  also define weak exterior derivatives $ \mathrm{d} ^k \colon H
  \Omega ^k (M) \subset L ^2 \Omega ^k (M) \rightarrow H \Omega ^{ k +
    1 } (M) \subset L ^2 \Omega ^{ k + 1 } (M) $; the domain complex $
  \left( H \Omega (M) , \mathrm{d} \right) $, with the graph inner
  product
  \begin{equation*}
    \left\langle u , v \right\rangle _{ H \Omega (M) } = \left\langle
      u, v \right\rangle _{ L ^2 \Omega (M) } + \left\langle
      \mathrm{d} u, \mathrm{d} v \right\rangle _{ L ^2 \Omega (M) } ,
  \end{equation*}
  is analogous to a Sobolev space of differential forms.  (For
  example, in $ \mathbb{R}^3 $, the domain complex corresponds to the
  spaces $ H ^1 $, $ H \left( \operatorname{curl} \right) $, and $ H
  \left( \operatorname{div} \right) $.)  Finally, we mention the fact
  that both the $ L ^2 $- and $H$-de~Rham complexes are closed.  For a
  detailed treatment of these complexes, and their many applications,
  see \citet{ArFaWi2010}.
\end{example}

For the remainder of the paper, we will follow the simplified notation
used by \citet{ArFaWi2010}: the $W$-inner product and norm will be
written simply as $ \left\langle \cdot , \cdot \right\rangle $ and $
\left\lVert \cdot \right\rVert $, without subscripts, while the
$V$-inner product and norm will be written explicitly as $
\left\langle \cdot , \cdot \right\rangle _V $ and $ \left\lVert \cdot
\right\rVert _V $.

\subsection{Hodge decomposition and the Poincar\'e inequality}

For $ L ^2 $ differential forms, the Hodge decomposition states that
any $k$-form can be written as a direct sum of exact, coexact, and
harmonic components.  (In $\mathbb{R}^3$, this corresponds to the
Helmholtz decomposition of vector fields.)  In fact, this can be
generalized to give a Hodge decomposition for arbitrary Hilbert
complexes; this immediately gives rise to an abstract version of the
Poincar\'e inequality, which is crucial to much of the analysis in
\citet{ArFaWi2010}.

Following \citet{BrLe1992}, we can decompose each space $ W ^k $ in
terms of orthogonal subspaces,
\begin{equation*}
  W ^k = \mathfrak{Z}  ^k \oplus  \mathfrak{Z}  ^{ k \perp _W } =
  \mathfrak{Z}  ^k \cap \bigl( \overline{ \mathfrak{B}  ^k } \oplus
  \mathfrak{B} ^{k \perp} \bigr) \oplus \mathfrak{Z}  ^{ k \perp _W } =
  \overline{ \mathfrak{B}  ^k } \oplus \mathfrak{H}  ^k \oplus
  \mathfrak{Z}  ^{ k \perp _W } ,
\end{equation*}
where the final expression is known as the \emph{weak Hodge
  decomposition}.  
For the domain complex $ \left( V,
  \mathrm{d} \right) $, the spaces $ \mathfrak{Z} ^k $, $ \mathfrak{B}
^k $, and $ \mathfrak{H} ^k $ are the same as for $ \left( W ,
  \mathrm{d} \right) $, and consequently we get the decomposition
\begin{equation*}
  V ^k = \overline{ \mathfrak{B}  ^k } \oplus \mathfrak{H}  ^k \oplus
  \mathfrak{Z} ^{ k \perp _V } ,
\end{equation*}
where $ \mathfrak{Z} ^{ k \perp _V } = \mathfrak{Z} ^{ k \perp _W }
\cap V ^k $.  In particular, if $ \left( W, \mathrm{d} \right) $ is a
closed Hilbert complex, then the image $ \mathfrak{B} ^k $ is a closed
subspace, so we have the \emph{strong Hodge decomposition}
\begin{equation*}
  W ^k = \mathfrak{B}  ^k \oplus \mathfrak{H}  ^k \oplus
  \mathfrak{Z}  ^{ k \perp _W } ,
\end{equation*}
and likewise for the domain complex,
\begin{equation*}
  V ^k = \mathfrak{B}  ^k \oplus \mathfrak{H}  ^k \oplus
  \mathfrak{Z} ^{ k \perp _V } .
\end{equation*}
From here on, following the notation of \citet{ArFaWi2010}, we will
simply write $ \mathfrak{Z} ^{ k \perp} $ in place of $ \mathfrak{Z}
^{ k \perp _V } $ when there can be no confusion.

\begin{lemma}[abstract Poincar\'e inequality]
  \label{lem:poincareInequality}
  If $ \left( V, \mathrm{d} \right) $ is a bounded, closed Hilbert
  complex, then there exists a constant $ c _P $ such that
  \begin{equation*}
    \left\lVert v \right\rVert _V \leq c _P {\lVert \mathrm{d} ^k v
      \rVert} _V , \quad \forall v \in \mathfrak{Z}  ^{ k \perp } .
  \end{equation*}
\end{lemma}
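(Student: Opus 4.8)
The plan is to realize the inequality as the boundedness of an inverse operator, obtained via the open mapping theorem. Define the restriction $T := \mathrm{d}^k |_{\mathfrak{Z}^{k\perp}} \colon \mathfrak{Z}^{k\perp} \to V^{k+1}$. Since the complex $\left( V, \mathrm{d} \right)$ is bounded, $T$ is a bounded linear map; and since $\mathfrak{Z}^{k\perp}$ is a closed subspace of the Hilbert space $V^k$, it is itself a Hilbert space. The goal is to exhibit $T$ as a bounded bijection onto a complete space, so that $T^{-1}$ is bounded; the norm of $T^{-1}$ will then serve as the constant $c_P$, because $\lVert v \rVert_V = \lVert T^{-1} \mathrm{d}^k v \rVert_V \leq \lVert T^{-1} \rVert \, \lVert \mathrm{d}^k v \rVert_V$ for every $v \in \mathfrak{Z}^{k\perp}$.

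First I would verify injectivity of $T$: if $v \in \mathfrak{Z}^{k\perp}$ satisfies $\mathrm{d}^k v = 0$, then $v \in \mathfrak{Z}^k \cap \mathfrak{Z}^{k\perp} = \{0\}$. Next, using the orthogonal splitting $V^k = \mathfrak{Z}^k \oplus \mathfrak{Z}^{k\perp}$ together with the fact that $\mathrm{d}^k$ annihilates $\mathfrak{Z}^k = \ker \mathrm{d}^k$, I would identify the range of $T$ with the full coboundary space $\mathfrak{B}^{k+1} = \mathrm{d}^k V^k$. Restricting the codomain of $T$ to $\mathfrak{B}^{k+1}$ then makes $T$ surjective, hence a bijection onto $\mathfrak{B}^{k+1}$.

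The main obstacle is to ensure that $\mathfrak{B}^{k+1}$ is a closed, hence complete, subspace of $V^{k+1}$, as the open mapping theorem requires. The closedness hypothesis gives that $\mathfrak{B}^{k+1}$ is closed in $W^{k+1}$, but this must be upgraded to closedness in the stronger $V$-topology. The key observation is that $\mathfrak{B}^{k+1} \subseteq \mathfrak{Z}^{k+1} = \ker \mathrm{d}^{k+1}$, since $\mathrm{d}^{k+1} \circ \mathrm{d}^k = 0$; consequently the graph inner product reduces to the $W$-inner product on coboundaries, so that $\lVert w \rVert_V = \lVert w \rVert$ for all $w \in \mathfrak{B}^{k+1}$. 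Thus $V$- and $W$-closedness coincide on $\mathfrak{B}^{k+1}$, and the desired closedness follows from the hypothesis. (This same identity shows, incidentally, that the right-hand side is unchanged whether one writes $\lVert \mathrm{d}^k v \rVert_V$ or $\lVert \mathrm{d}^k v \rVert$, since $\mathrm{d}^k v \in \mathfrak{B}^{k+1}$.)

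With injectivity, surjectivity onto the complete space $\mathfrak{B}^{k+1}$, and boundedness of $T$ all established, the bounded inverse theorem yields a bounded inverse $T^{-1}$, and setting $c_P = \lVert T^{-1} \rVert$ completes the argument.
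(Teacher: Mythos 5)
Your proposal is correct and follows essentially the same route as the paper: the paper's proof is precisely the observation that $\mathrm{d}^k$ is a bounded bijection from the closed subspace $\mathfrak{Z}^{k\perp}$ onto the closed subspace $\mathfrak{B}^{k+1}$, followed by Banach's bounded inverse theorem. You have merely filled in the details the paper leaves implicit (injectivity, surjectivity, and the fact that the graph norm agrees with the $W$-norm on $\mathfrak{B}^{k+1} \subseteq \mathfrak{Z}^{k+1}$, so closedness holds in the relevant topology), all of which are handled correctly.
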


\begin{proof}
  The map $ \mathrm{d} ^k $ is a bounded bijection from $ \mathfrak{Z}
  ^{ k \perp } $ to $ \mathfrak{B} ^{ k + 1 } $, which are both closed
  subspaces, so the result follows immediately by applying Banach's
  bounded inverse theorem.
\end{proof}

\begin{corollary}
  If $ \left( V, \mathrm{d} \right) $ is the domain complex of a
  closed Hilbert complex $ \left( W, \mathrm{d} \right) $, then
  \begin{equation*}
    \left\lVert v \right\rVert _V \leq c _P {\lVert \mathrm{d} ^k v
      \rVert} , \quad \forall v \in \mathfrak{Z}  ^{ k \perp } .
  \end{equation*}
\end{corollary}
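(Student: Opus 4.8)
The plan is to deduce this directly from the abstract Poincar\'e inequality of \autoref{lem:poincareInequality}. The only subtlety is that that lemma bounds $\lVert v \rVert_V$ by the $V$-norm $\lVert \mathrm{d}^k v \rVert_V$, whereas here we want the (a priori smaller) $W$-norm $\lVert \mathrm{d}^k v \rVert$ on the right; so the whole task reduces to showing that these two norms of $\mathrm{d}^k v$ coincide.

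First I would note that, by the remark following the definition of the domain complex, the hypothesis that $(W,\mathrm{d})$ is closed ensures that $(V,\mathrm{d})$ is a bounded, closed Hilbert complex. Hence \autoref{lem:poincareInequality} applies to $(V,\mathrm{d})$ and furnishes a constant $c_P$ with
\[
  \lVert v \rVert_V \leq c_P \lVert \mathrm{d}^k v \rVert_V, \qquad \forall v \in \mathfrak{Z}^{k\perp}.
\]

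The key step is then to simplify the right-hand side. Expanding the graph inner product at level $k+1$ gives
\[
  \lVert \mathrm{d}^k v \rVert_V^2 = \lVert \mathrm{d}^k v \rVert^2 + \lVert \mathrm{d}^{k+1} \mathrm{d}^k v \rVert^2,
\]
and since $(W,\mathrm{d})$ is a complex we have $\mathrm{d}^{k+1}\mathrm{d}^k = 0$, so the second term vanishes and $\lVert \mathrm{d}^k v \rVert_V = \lVert \mathrm{d}^k v \rVert$. Substituting this into the previous inequality yields the claim with the same constant $c_P$. There is no real obstacle here: the entire content is the observation that $\mathrm{d}^k v$ is a cocycle, so its graph norm receives no contribution from $\mathrm{d}^{k+1}$. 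The only points to watch are applying the graph inner product at the correct level $k+1$ (rather than $k$) and confirming that the Poincar\'e constant carries over unchanged.
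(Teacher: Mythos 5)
Your proof is correct and is exactly the argument the paper leaves implicit: the remark following the definition of the domain complex guarantees that $(V,\mathrm{d})$ is bounded and closed (so \autoref{lem:poincareInequality} applies), and the identity $\lVert \mathrm{d}^k v \rVert_V = \lVert \mathrm{d}^k v \rVert$, which follows from $\mathrm{d}^{k+1}\mathrm{d}^k = 0$, is the same observation the paper itself uses in that remark. The constant $c_P$ indeed carries over unchanged, so nothing is missing.
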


We close this subsection by defining the dual complex of a Hilbert
complex, and recalling how the Hodge decomposition can be interpreted in
terms of this complex.

\begin{definition}
  Given a Hilbert complex $ \left( W, \mathrm{d} \right) $, the
  \emph{dual complex} $ \left( W ^\ast , \mathrm{d} ^\ast \right) $
  consists of the spaces $ W _k ^\ast = W ^k $, and adjoint operators
  $ \mathrm{d} _k ^\ast = \left( \mathrm{d} ^{ k - 1 } \right) ^\ast
  \colon V _k ^\ast \subset W _k ^\ast \rightarrow V _{ k - 1 } ^\ast
  \subset W _{ k - 1 } ^\ast $.
\begin{equation*}
  \xymatrix{
    \cdots & \ar[l]  V _{k-1} ^\ast & \ar[l] _-{ \mathrm{d} _k ^\ast }
    V _k ^\ast & \ar[l] _-{ \mathrm{d} _{ k + 1 } ^\ast }  V _{ k
      + 1 } ^\ast & \ar[l] \cdots 
  } 
\end{equation*}
\end{definition}

\begin{remark}
  Since the arrows in the dual complex point in the opposite
  direction, this is a Hilbert chain complex rather than a cochain
  complex.  (The chain property $ \mathrm{d} _k ^\ast \circ \mathrm{d}
  _{ k + 1 } ^\ast = 0 $ follows immediately from the cochain property
  $ \mathrm{d} ^k \circ \mathrm{d} ^{ k - 1 } = 0 $.) Accordingly, we
  can define the \emph{$k$-cycles} $ \mathfrak{Z} _k ^\ast = \ker
  \mathrm{d} _k ^\ast = \mathfrak{B} ^{ k \perp _W } $ and \emph{$ k
    $-boundaries} $ \mathfrak{B} _k ^\ast = \mathrm{d} _{ k + 1 }
  ^\ast V ^\ast _k $.  The $k$th harmonic space can then be rewritten
  as $ \mathfrak{H} ^k = \mathfrak{Z} ^k \cap \mathfrak{Z} _k ^\ast $;
  we also have $ \mathfrak{Z} ^k = \mathfrak{B} _k ^{\smash{\ast \perp
      _W }} $, and thus $ \mathfrak{Z} ^{k \perp _W } = \overline{
    \mathfrak{B} _k ^\ast } $.  Therefore, the weak Hodge
  decomposition can be written as
  \begin{equation*}
    W ^k = \overline{ \mathfrak{B}  ^k } \oplus \mathfrak{H}  ^k \oplus
    \overline{ \mathfrak{B}  _k ^\ast } ,
  \end{equation*}
  and in particular, for a closed Hilbert complex, the strong Hodge
  decomposition now becomes
  \begin{equation*}
    W ^k = \mathfrak{B}  ^k \oplus \mathfrak{H}  ^k \oplus
    \mathfrak{B}  _k ^\ast .
  \end{equation*}
\end{remark}

\subsection{The abstract Hodge Laplacian and mixed variational
  problem}
The \emph{abstract Hodge Laplacian} is the operator $ L = \mathrm{d}
\mathrm{d} ^\ast + \mathrm{d} ^\ast \mathrm{d} $, which is an
unbounded operator $ W ^k \rightarrow W ^k $ with domain
\begin{equation*}
  D _L = \left\{ u \in V ^k \cap V _k ^\ast \;\middle\vert\;
    \mathrm{d} u \in V _{ k + 1 } ^\ast ,\ \mathrm{d} ^\ast u \in V ^{
      k - 1 } \right\} .
\end{equation*}
This is a generalization of the Hodge Laplacian for differential
forms, which itself is a generalization of the usual scalar and vector
Laplacian operators on domains in $ \mathbb{R}^n $ (as well as of the
Laplace--Beltrami operator on Riemannian manifolds).

If $u \in D _L $ solves $ L u = f $, then it satisfies the variational
principle
\begin{equation*}
  \left\langle \mathrm{d} u , \mathrm{d} v \right\rangle +
  \left\langle \mathrm{d} ^\ast u , \mathrm{d} ^\ast v \right\rangle =
  \left\langle f, v \right\rangle , \quad \forall v \in V ^k \cap V _k
  ^\ast .
\end{equation*}
However, as noted by \citet{ArFaWi2010}, there are some difficulties
in using this variational principle for a finite element
approximation.  First, it may be difficult to construct finite
elements for the space $ V ^k \cap V _k ^\ast $.  A second concern is
the well-posedness of the problem.  If we take any harmonic test
function $ v \in \mathfrak{H} ^k $, then the left-hand side vanishes,
so $ \left\langle f, v \right\rangle = 0 $; hence, a solution only
exists if $ f \perp \mathfrak{H} ^k $.  Furthermore, for any $ q \in
\mathfrak{H} ^k = \mathfrak{Z} ^k \cap \mathfrak{Z} _k ^\ast $, we
have $ \mathrm{d} q = 0 $ and $ \mathrm{d} ^\ast q = 0 $; therefore,
if $u$ is a solution, then so is $ u + q $.

To avoid these existence and uniqueness issues, one instead defines
the following mixed variational problem: Find $ \left( \sigma, u, p
\right) \in V ^{ k - 1 } \times V ^k \times \mathfrak{H} ^k $
satisfying
\begin{equation}
  \label{eqn:mixedProblem}
  \begin{alignedat}{2}
    \left\langle \sigma , \tau \right\rangle - \left\langle u ,
      \mathrm{d} \tau \right\rangle &= 0, &\quad \forall \tau &\in V
    ^{ k - 1 } ,\\
    \left\langle \mathrm{d} \sigma, v \right\rangle + \left\langle
      \mathrm{d} u, \mathrm{d} v \right\rangle + \left\langle p, v
    \right\rangle &= \left\langle f, v \right\rangle , &\quad \forall
    v &\in V ^k ,\\
    \left\langle u, q \right\rangle &= 0 , &\quad \forall q &\in
    \mathfrak{H} ^k .
  \end{alignedat}
\end{equation}
Here, the first equation implies that $ \sigma = \mathrm{d} ^\ast u $,
which weakly enforces the condition $ u \in V ^k \cap V _k ^\ast
$. Next, the second equation incorporates the additional term $
\left\langle p, v \right\rangle $, which allows for solutions to exist
even when $f \not\perp \mathfrak{H} ^k $.  Finally, the third equation
fixes the issue of non-uniqueness by requiring $ u \perp \mathfrak{H}
^k $.  The following result establishes the well-posedness of the
problem \eqref{eqn:mixedProblem}.

\begin{theorem}[\citet{ArFaWi2010}, Theorem 3.1]
  Let $ \left( W, \mathrm{d} \right) $ be a closed Hilbert complex
  with domain complex $ \left( V, \mathrm{d} \right) $.  The mixed
  formulation of the abstract Hodge Laplacian is well-posed.  That is,
  for any $ f \in W ^k $, there exists a unique $ \left( \sigma, u, p
  \right) \in V ^{ k - 1 } \times V ^k \times \mathfrak{H} ^k $
  satisfying \eqref{eqn:mixedProblem}.  Moreover,
  \begin{equation*}
    \left\lVert \sigma \right\rVert _V + \left\lVert u \right\rVert _V
    + \left\lVert p \right\rVert \leq c \left\lVert f \right\rVert ,
  \end{equation*}
  where $c$ is a constant depending only on the Poincar\'e constant $
  c _P $ in \autoref{lem:poincareInequality}.
\end{theorem}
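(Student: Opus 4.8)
The plan is to recast the three equations of \eqref{eqn:mixedProblem} as a single bilinear form on a product space and then invoke the inf-sup (Banach--Ne\v{c}as--Babu\v{s}ka) theory. Write $X = V^{k-1}\times V^k\times\mathfrak{H}^k$, equipped with the norm $\lVert(\sigma,u,p)\rVert_X^2 = \lVert\sigma\rVert_V^2 + \lVert u\rVert_V^2 + \lVert p\rVert^2$, and define
\[
 B(\sigma,u,p;\tau,v,q) = \langle\sigma,\tau\rangle - \langle u,\mathrm{d}\tau\rangle + \langle\mathrm{d}\sigma,v\rangle + \langle\mathrm{d}u,\mathrm{d}v\rangle + \langle p,v\rangle + \langle u,q\rangle,
\]
together with $F(\tau,v,q) = \langle f,v\rangle$. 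Then \eqref{eqn:mixedProblem} is exactly the problem of finding $(\sigma,u,p)\in X$ with $B(\sigma,u,p;\tau,v,q) = F(\tau,v,q)$ for all $(\tau,v,q)\in X$. Boundedness of $B$ on $X\times X$ and of $F$ (with $\lVert F\rVert\le\lVert f\rVert$) is immediate from Cauchy--Schwarz and the elementary bound $\lVert\cdot\rVert\le\lVert\cdot\rVert_V$, so the entire theorem---existence, uniqueness, and the stability estimate---reduces to verifying an inf-sup condition for $B$ with constant depending only on $c_P$.

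The heart of the argument is to show that for every $(\sigma,u,p)\in X$ there is a test triple $(\tau,v,q)\in X$ with
\[
 B(\sigma,u,p;\tau,v,q)\ \geq\ \gamma\,\lVert(\sigma,u,p)\rVert_X^2, \qquad \lVert(\tau,v,q)\rVert_X\ \leq\ C\,\lVert(\sigma,u,p)\rVert_X,
\]
for constants $\gamma,C$ depending only on $c_P$. I would build this triple explicitly from the strong Hodge decomposition $V^k = \mathfrak{B}^k\oplus\mathfrak{H}^k\oplus\mathfrak{Z}^{k\perp}$, which is orthogonal in both the $W$- and $V$-inner products since coboundaries and harmonic forms are annihilated by $\mathrm{d}$. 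Decompose $u = \mathrm{d}\rho + u_{\mathfrak{H}} + u_\perp$, choosing $\rho\in\mathfrak{Z}^{(k-1)\perp}$ with $\mathrm{d}\rho = u_{\mathfrak{B}}\in\mathfrak{B}^k$; the abstract Poincar\'e inequality (\autoref{lem:poincareInequality}) then controls the non-exact, non-harmonic part via $\lVert u_\perp\rVert_V\le c_P\lVert\mathrm{d}u\rVert$ and the potential via $\lVert\rho\rVert_V\le c_P\lVert u_{\mathfrak{B}}\rVert$.

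With this in hand I would take, for a small scaling parameter $\delta>0$ to be fixed in terms of $c_P$,
\[
 \tau = \sigma - \delta\rho, \qquad v = u + \mathrm{d}\sigma + p, \qquad q = u_{\mathfrak{H}},
\]
and compute $B$ on this triple. Using $\mathrm{d}\circ\mathrm{d}=0$, the orthogonalities $p\perp\mathfrak{B}^k$ and $\mathfrak{H}^k\perp\mathfrak{B}^k$, and the cancellation $-\langle u,\mathrm{d}\sigma\rangle + \langle\mathrm{d}\sigma,u\rangle = 0$, the off-diagonal contributions collapse and the diagonal yields the positive quantity $\lVert\sigma\rVert_V^2 + \lVert\mathrm{d}u\rVert^2 + \delta\lVert u_{\mathfrak{B}}\rVert^2 + \lVert u_{\mathfrak{H}}\rVert^2 + \lVert p\rVert^2$, leaving only the cross terms $\langle p,u\rangle = \langle p,u_{\mathfrak{H}}\rangle$ and $-\delta\langle\sigma,\rho\rangle$. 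These are absorbed by Young's inequality: the first against $\lVert p\rVert^2 + \lVert u_{\mathfrak{H}}\rVert^2$, and the second (after $\lvert\langle\sigma,\rho\rangle\rvert\le c_P\lVert\sigma\rVert\,\lVert u_{\mathfrak{B}}\rVert$) against $\lVert\sigma\rVert^2 + \delta\lVert u_{\mathfrak{B}}\rVert^2$, where taking $\delta$ small in terms of $c_P$ keeps every coefficient strictly positive. Finally $\lVert u_\perp\rVert_V^2$ is recovered from $\lVert\mathrm{d}u\rVert^2$ through Poincar\'e, completing the lower bound; the companion bound $\lVert(\tau,v,q)\rVert_X\le C\lVert(\sigma,u,p)\rVert_X$ follows from $\lVert\mathrm{d}\sigma\rVert_V=\lVert\mathrm{d}\sigma\rVert$, $\lVert p\rVert_V=\lVert p\rVert$, and $\lVert\rho\rVert_V\le c_P\lVert u_{\mathfrak{B}}\rVert$.

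The inf-sup condition just proved gives injectivity together with the a priori estimate $\lVert(\sigma,u,p)\rVert_X\le\gamma^{-1}\lVert f\rVert$, which is the asserted bound with $c$ depending only on $c_P$. For surjectivity I would observe that flipping the sign of the first equation makes the $\mathrm{d}$-coupling into the symmetric pair $\langle u,\mathrm{d}\tau\rangle$, $\langle\mathrm{d}\sigma,v\rangle$, so the resulting form is symmetric; hence its inf-sup condition is automatically two-sided and $B$ induces an isomorphism $X\to X^{*}$, giving existence and uniqueness. The main obstacle throughout is the explicit inf-sup test-function construction and the bookkeeping required to ensure that $\gamma$, and hence $c$, depends only on the Poincar\'e constant $c_P$.
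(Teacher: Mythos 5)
Your proposal is correct and follows essentially the same route as the paper (which defers to Arnold--Falk--Winther's Theorem 3.2): recast \eqref{eqn:mixedProblem} as $B(x;y)=F(y)$ on $V^{k-1}\times V^k\times\mathfrak{H}^k$ and verify the inf-sup condition with test functions built from the Hodge decomposition and the Poincar\'e inequality; your choice $\tau=\sigma-\delta\rho$, $v=u+\mathrm{d}\sigma+p$, $q=u_{\mathfrak{H}}$ is the standard construction up to the scaling $\delta$ and a harmless sign convention on the $\langle u,q\rangle$ term (the third equation is homogeneous, so both conventions define the same problem).
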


To prove this, they observe that \eqref{eqn:mixedProblem} can be
rewritten as a standard variational problem---i.e., one having the
form $ B \left( x, y \right) = F (y) $---on the space $ V ^{ k - 1 }
\times V ^k \times \mathfrak{H} ^k $, by defining the bilinear form
\begin{equation*}
  B \left( \sigma  , u , p ; \tau , v , q \right)
  = \left\langle \sigma  , \tau  \right\rangle  - \left\langle u
     , \mathrm{d} \tau \right\rangle  \\
  + \left\langle \mathrm{d} \sigma  , v  \right\rangle  +
  \left\langle \mathrm{d} u  , \mathrm{d} v  \right\rangle  +
  \left\langle p  , v  \right\rangle  - \left\langle u  , q 
  \right\rangle 
\end{equation*} 
and the functional $ F \left( \tau, v, q \right) = \left\langle f, v
\right\rangle $.  The well-posedness of the mixed problem then follows
by establishing the inf-sup condition for the bilinear form $B \left(
  \cdot , \cdot \right) $ \citep[Theorem 3.2]{ArFaWi2010}, which shows
that it defines a linear homeomorphism.  This well-posedness result
implies the existence of a bounded solution operator $ K \colon W ^k
\rightarrow W ^k $ defined by $ K f = u $.

\subsection{Approximation by a subcomplex}

In order to obtain approximate numerical solutions to the mixed
variational problem \eqref{eqn:mixedProblem}, \citet{ArFaWi2010}
suppose that one is given a (finite-dimensional) subcomplex $ V _h
\subset V $ of the domain complex: that is, $ V _h ^k \subset V ^k $
is a Hilbert subspace for each $k$, and the inclusion mapping $ i _h
\colon V _h \hookrightarrow V $ is a morphism of Hilbert complexes.
By analogy with the Galerkin method, one can then consider the mixed
variational problem on the subcomplex: Find $ \left( \sigma _h , u _h
  , p _h \right) \in V _h ^{ k - 1 } \times V _h ^k \times
\mathfrak{H} _h ^k $ satisfying
\begin{equation}
  \label{eqn:subcomplexProblem}
  \begin{alignedat}{2}
    \left\langle \sigma _h , \tau \right\rangle - \left\langle u _h ,
      \mathrm{d} \tau \right\rangle &= 0, &\quad \forall \tau &\in V
    _h
    ^{ k - 1 } ,\\
    \left\langle \mathrm{d} \sigma _h , v \right\rangle + \left\langle
      \mathrm{d} u _h , \mathrm{d} v \right\rangle + \left\langle p _h
      , v \right\rangle &= \left\langle f, v \right\rangle , &\quad
    \forall
    v &\in V _h  ^k ,\\
    \left\langle u _h , q \right\rangle &= 0 , &\quad \forall q &\in
    \mathfrak{H} _h ^k .
  \end{alignedat}
\end{equation}

For the error analysis of this method, one more crucial assumption
must be made: that there exists some Hilbert complex ``projection'' $
\pi _h \colon V \rightarrow V _h $.  We put ``projection'' in quotes
because this need not be the actual orthogonal projection $ i _h ^\ast
$ with respect to the inner product; indeed, that projection is not
generally a morphism of Hilbert complexes, since it may not commute
with the differentials.  However, the map $ \pi _h $ is $V$-bounded,
surjective, and idempotent.  It follows, then, that although it does
not satisfy the optimality property of the orthogonal projection, it
does still satisfy a \emph{quasi-optimality} property, since
\begin{equation*}
  \left\lVert u - \pi _h u  \right\rVert _V = \inf _{ v \in V _h }
  \left\lVert \left( I - \pi _h \right) \left( u - v \right)
  \right\rVert _V \leq \left\lVert I - \pi _h \right\rVert \inf _{ v
    \in V _h } \left\lVert u - v \right\rVert _V ,
\end{equation*}
where the first step follows from the idempotence of $ \pi _h $, i.e.,
$ \left( I - \pi _h \right) v = 0 $ for all $ v \in V _h $.  With this
framework in place, the following error estimate can be established.

\begin{theorem}[\citet{ArFaWi2010}, Theorem 3.9]
  \label{thm:subcomplex}
  Let $ \left( V _h , \mathrm{d} \right) $ be a family of subcomplexes
  of the domain complex $ \left( V, \mathrm{d} \right) $ of a closed
  Hilbert complex, parametrized by $h$ and admitting uniformly
  $V$-bounded cochain projections, and let $ \left( \sigma, u, p
  \right) \in V ^{ k - 1 } \times V ^k \times \mathfrak{H} ^k $ be the
  solution of \eqref{eqn:mixedProblem} and $ \left( \sigma _h , u _h ,
    p _h \right) \in V _h ^{ k - 1 } \times V _h ^k \times
  \mathfrak{H} _h ^k $ the solution of problem
  \eqref{eqn:subcomplexProblem}.  Then
  \begin{multline*}
    \left\lVert \sigma - \sigma _h \right\rVert _V + \left\lVert u - u
      _h \right\rVert _V + \left\lVert p - p _h \right\rVert \\
    \leq C \bigl( \inf _{ \tau \in V _h ^{ k - 1 } } \left\lVert
      \sigma - \tau \right\rVert _V + \inf _{ v \in V _h ^k }
    \left\lVert u - v \right\rVert _V + \inf _{ q \in V _h ^k }
    \left\lVert p - q \right\rVert _V + \mu \inf _{ v \in V _h ^k }
    \left\lVert P _{ \mathfrak{B} } u - v \right\rVert _V \bigr) ,
  \end{multline*}
  where $ \mu = \mu _h ^k = \displaystyle\sup _{ \substack{r \in
      \mathfrak{H} ^k \\ \left\lVert r \right\rVert = 1 } }
  \left\lVert \left( I - \pi _h ^k \right) r \right\rVert $.
\end{theorem}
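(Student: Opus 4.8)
The plan is to reduce the estimate to two ingredients: (i) uniform \emph{discrete stability} of the mixed problem posed on the subcomplex, and (ii) a Strang-type consistency analysis accounting for the fact that the discrete harmonic space $\mathfrak{H}_h^k$ need not lie inside the continuous one $\mathfrak{H}^k$. For (i), I would first observe that the uniform $V$-boundedness of the cochain projections $\pi_h$ yields a uniform discrete Poincar\'e inequality on $\mathfrak{Z}_h^{k\perp}$: the argument of \autoref{lem:poincareInequality} localizes to the subcomplex, with the Poincar\'e constant controlled by $c_P$ and $\sup_h \lVert \pi_h \rVert$. The discrete Hodge decomposition $V_h^k = \mathfrak{B}_h^k \oplus \mathfrak{H}_h^k \oplus \mathfrak{Z}_h^{k\perp}$ together with this inequality gives a discrete inf-sup condition for the bilinear form $B$ on $V_h^{k-1} \times V_h^k \times \mathfrak{H}_h^k$, with inf-sup constant bounded below uniformly in $h$. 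Hence \eqref{eqn:subcomplexProblem} is well-posed with a uniformly bounded solution operator.

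Next I would form the error equations by subtracting \eqref{eqn:subcomplexProblem} from \eqref{eqn:mixedProblem}, tested against functions in the subcomplex. Since $V_h \subset V$ is a genuine subcomplex, the first two equations are Galerkin-orthogonal: $\langle \sigma - \sigma_h, \tau \rangle - \langle u - u_h, \mathrm{d}\tau \rangle = 0$ for all $\tau \in V_h^{k-1}$, and $\langle \mathrm{d}(\sigma - \sigma_h), v \rangle + \langle \mathrm{d}(u - u_h), \mathrm{d} v \rangle + \langle p - p_h, v \rangle = 0$ for all $v \in V_h^k$. The third equation is the sole source of inconsistency: for $q \in \mathfrak{H}_h^k$ one gets $\langle u - u_h, q \rangle = \langle u, q \rangle$, which need not vanish because $q \notin \mathfrak{H}^k$ in general. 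The crucial simplification is that $\mathfrak{H}_h^k \subset \mathfrak{Z}^k$ (a discrete cocycle is a genuine cocycle, as $\mathrm{d}$ on $V_h$ is the restriction of $\mathrm{d}$); so, writing the Hodge decomposition $u = P_{\mathfrak{B}} u + P_{\mathfrak{Z}^\perp} u$ (the harmonic part vanishes since $u \perp \mathfrak{H}^k$) and using $q \perp \mathfrak{Z}^{k\perp}$, the consistency term collapses to $\langle u, q \rangle = \langle P_{\mathfrak{B}} u, q \rangle$. This is precisely why only the coboundary component $P_{\mathfrak{B}} u$ enters the refined term.

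With discrete stability and the error equations in hand, I would run the standard argument: choose best discrete approximations $\tau \approx \sigma$, $v \approx u$, $q \approx p$, apply the uniform inf-sup to the discrete residual $(\sigma_h - \tau, u_h - v, p_h - q)$, and bound it by the approximation residuals $B(\sigma - \tau, u - v, p - q; \cdot)$ plus the consistency functional $q \mapsto \langle P_{\mathfrak{B}} u, q \rangle$. The former contributions produce the best-approximation terms $\inf \lVert \sigma - \tau \rVert_V + \inf \lVert u - v \rVert_V + \inf \lVert p - q \rVert_V$ after a triangle inequality. To extract the refined term $\mu \inf \lVert P_{\mathfrak{B}} u - v \rVert_V$ from the consistency functional, I would subtract an arbitrary discrete coboundary $w \in \mathfrak{B}_h^k$, so that $\langle w, q \rangle = 0$ by the discrete orthogonality $\mathfrak{H}_h^k \perp \mathfrak{B}_h^k$, giving $\langle P_{\mathfrak{B}} u, q \rangle = \langle P_{\mathfrak{B}} u - w, q \rangle$, and then control the harmonic gap $\lVert (I - P_{\mathfrak{H}}) q \rVert$ between $q \in \mathfrak{H}_h^k$ and $\mathfrak{H}^k$ by $\mu \lVert q \rVert$, using the definition of $\mu$ and the quasi-optimality of $\pi_h$. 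Optimizing over $w$ and comparing $\inf_{w \in \mathfrak{B}_h^k}$ with $\inf_{v \in V_h^k}$ yields the stated factor.

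The main obstacle is exactly this refined consistency estimate. The crude bound $\lvert \langle P_{\mathfrak{B}} u, q \rangle \rvert \le \mu \lVert P_{\mathfrak{B}} u \rVert \lVert q \rVert$ would leave a term $\mu \lVert P_{\mathfrak{B}} u \rVert$ that does not decay under refinement and destroys the convergence rate; recovering the approximation quantity $\mu \inf_v \lVert P_{\mathfrak{B}} u - v \rVert_V$ instead demands careful use of the discrete orthogonality $\mathfrak{H}_h^k \perp \mathfrak{B}_h^k$ to insert the subtraction $w$, together with a clean bound on the harmonic gap $\lVert (I - P_{\mathfrak{H}})\rvert_{\mathfrak{H}_h^k} \rVert \lesssim \mu$. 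By contrast, establishing the uniform discrete Poincar\'e and inf-sup constants is routine given the bounded cochain projections, but it is the structural backbone on which the entire estimate rests.
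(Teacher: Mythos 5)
A preliminary remark: the paper itself does not prove \autoref{thm:subcomplex} --- it is quoted verbatim from \citet{ArFaWi2010} --- so your proposal can only be measured against the Arnold--Falk--Winther proof. In outline you have reconstructed that proof correctly: uniform discrete stability from the $V$-bounded cochain projections, Galerkin orthogonality in the first two error equations, the collapse of the consistency term to $\left\langle P_{\mathfrak{B}} u , q \right\rangle$ using $\mathfrak{H}_h^k \subset \mathfrak{Z}^k$ and $u \perp \mathfrak{H}^k$, and a refined bound exploiting the discrete orthogonality $\mathfrak{H}_h^k \perp \mathfrak{B}_h^k$.

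There is, however, a genuine gap at precisely the step you flag as the main obstacle, and your proposed justification for it would not work. You need the one-sided gap bound $\lVert \left( I - P_{\mathfrak{H}} \right) q \rVert \leq C \mu \lVert q \rVert$ for $q \in \mathfrak{H}_h^k$, and you assert it follows from ``the definition of $\mu$ and the quasi-optimality of $\pi_h$.'' But $\mu$ controls $\left( I - \pi_h \right)$ only on the \emph{continuous} harmonic space: together with the projection trick it gives $\operatorname{dist} \left( r, \mathfrak{H}_h^k \right) \leq \lVert \left( I - P_{\mathfrak{B}_h} \right) \left( I - \pi_h \right) r \rVert \leq \mu \lVert r \rVert$ for $r \in \mathfrak{H}^k$, i.e.\ the gap in the direction $\mathfrak{H}^k \rightarrow \mathfrak{H}_h^k$, whereas your consistency estimate needs the opposite direction. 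Direct manipulation with only the tools you cite falls short: for $q \in \mathfrak{H}_h^k$, using $\mathfrak{Z}^k = \mathfrak{B}^k \oplus \mathfrak{H}^k$, the orthogonality $q \perp \mathfrak{B}_h^k$, the commutation $\pi_h P_{\mathfrak{B}} q \in \mathfrak{B}_h^k$, and the idempotence $\pi_h q = q$, one gets $\lVert P_{\mathfrak{B}} q \rVert ^2 = \langle q , \left( I - \pi_h \right) P_{\mathfrak{B}} q \rangle = - \langle q, \left( I - \pi_h \right) P_{\mathfrak{H}} q \rangle \leq \mu \lVert q \rVert ^2$, which yields only $\lVert \left( I - P _{\mathfrak{H}} \right) q \rVert \leq \sqrt{\mu}\, \lVert q \rVert$ and would degrade the stated term to $\sqrt{\mu} \inf_{v} \lVert P_{\mathfrak{B}} u - v \rVert _V$. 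The missing ingredient is the fact that $\dim \mathfrak{H}_h^k = \dim \mathfrak{H}^k$, which in Arnold--Falk--Winther comes from the cochain projection inducing an isomorphism on cohomology (their Theorem 3.4); once the dimensions agree, $P_{\mathfrak{H}_h} |_{\mathfrak{H}}$ is a bijection with inverse bounded by $\left( 1 - \mu^2 \right) ^{-1/2}$, so every unit $q \in \mathfrak{H}_h^k$ lies within $\mu \left( 1 - \mu ^2 \right) ^{ -1/2 }$ of $\mathfrak{H}^k$ (equivalently, one may cite the symmetry of the gap for subspaces of equal finite dimension --- this is their Theorem 3.5). Without this dimension count, the direction of the gap you need has no a priori relation to $\mu$, so you must either prove the cohomology isomorphism or invoke it as a separate lemma.

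Two smaller points. The uniform discrete Poincar\'e inequality is not obtained by ``localizing'' the proof of \autoref{lem:poincareInequality}: Banach's bounded inverse theorem applied to $V_h$ gives a constant with no control in $h$. Instead one transfers the continuous inequality, choosing $z \in \mathfrak{Z}^{k\perp}$ with $\mathrm{d} z = \mathrm{d} v$ and using that $v - \pi_h z \in \mathfrak{Z}_h^k$; your stated conclusion (constant $c_P \sup_h \lVert \pi_h \rVert$) is nevertheless the right one. Finally, since the discrete trial space for the harmonic component is $\mathfrak{H}_h^k$ rather than $V_h^k$, reaching the stated right-hand side also requires $\inf_{q \in \mathfrak{H}_h^k} \lVert p - q \rVert \leq C \inf_{v \in V_h^k} \lVert p - v \rVert _V$, which follows from the same projection trick applied to $P_{\mathfrak{H}_h} \pi_h p$ and should be made explicit.
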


Therefore, if $ V _h $ is pointwise approximating, in the sense that $
\inf _{ v \in V _h } \left\lVert u - v \right\rVert \rightarrow 0 $ as
$ h \rightarrow 0 $ for every $ u \in V $, then the numerical solution
converges to the exact solution.

\subsection{Improved error estimates}
\label{sec:AFWimproved}
Finally, it can be shown that one can establish improved estimates in
the $W$-norm, subject to a ``compactness property.''  The Hilbert
complex $ \left( W, \mathrm{d} \right) $ is said to have the
\emph{compactness property} if $ V ^k \cap V _k ^\ast $ is a dense
subset of $ W ^k $, and if the inclusion $ \mathcal{I} \colon V ^k
\cap V _k ^\ast \hookrightarrow W ^k $ is compact.  Furthermore,
assume that the family of projections $ \pi _h $ is uniformly
$W$-bounded (rather than merely $V$-bounded) with respect to $h$.
These properties hold for many important examples---notably the $ L ^2
$-de~Rham complex of differential forms---and allows for an abstract
generalization of duality-based $ L ^2 $ estimates (i.e., the
Aubin--Nitsche trick) to the mixed variational problem.

The compactness of the inclusion implies that $K$ is also compact, so
one may define the coefficients
\begin{gather*}
  \delta = \delta _h ^k = \left\lVert \left( I - \pi _h \right) K
  \right\rVert _{ \mathcal{L} \left( W ^k , W ^k \right) } , \qquad
  \mu = \mu _h ^k = \left\lVert \left( I - \pi _h \right) P _{
      \mathfrak{H} } \right\rVert _{ \mathcal{L} \left( W ^k , W ^k
    \right) },\\
  \eta = \eta _h ^k = \max _{ j = 0,1} \left\{ \left\lVert \left( I -
        \pi _h \right) \mathrm{d} K \right\rVert _{ \mathcal{L} \left(
        W ^{ k - j }, W ^{ k - j + 1 } \right) } , \left\lVert \left(
        I - \pi _h \right) \mathrm{d} ^\ast K \right\rVert _{
      \mathcal{L} \left( W ^{ k + j }, W ^{ k + j - 1 } \right) }
  \right\} ,
\end{gather*}
each of which vanishes in the limit as $ h \rightarrow 0 $.  Next, let
us denote best approximation in the $W$-norm by
\begin{equation*}
  E (w) = \inf _{ v \in V _h ^k } \left\lVert w - v \right\rVert ,
  \quad w \in W ^k .
\end{equation*} 
Then the improved estimates are stated in the following theorem.

\begin{theorem}[\citet{ArFaWi2010}, Theorem 3.11]
  \label{thm:improved}
  Let $ \left( V, \mathrm{d} \right) $ be the domain complex of a
  closed Hilbert complex $ \left( W, \mathrm{d} \right) $ satisfying
  the compactness property, and let $ \left( V _h , \mathrm{d} \right)
  $ be a family of subcomplexes parametrized by $h$ and admitting
  uniformly $W$-bounded cochain projections.  Let $ \left( \sigma, u,
    p \right) \in V ^{ k - 1 } \times V ^k \times \mathfrak{H} ^k $ be
  the solution of \eqref{eqn:mixedProblem} and $ \left( \sigma _h , u
    _h , p _h \right) \in V _h ^{ k - 1 } \times V _h ^k \times
  \mathfrak{H} _h ^k $ the solution of problem
  \eqref{eqn:subcomplexProblem}.  Then for some constant $C$
  independent of $h$ and $ \left( \sigma, u, p \right) $, we have
\begin{align*}
  \left\lVert \mathrm{d} \left( \sigma - \sigma _h \right)
  \right\rVert &\leq C E \left( \mathrm{d} \sigma \right), \\
  \left\lVert \sigma - \sigma _h  \right\rVert &\leq C \left[ E
    (\sigma) + \eta E \left( \mathrm{d} \sigma \right) \right], \\
  \left\lVert p - p _h  \right\rVert &\leq C \left[ E (p) + \mu
    E \left( \mathrm{d} \sigma \right) \right], \\
  \left\lVert \mathrm{d} \left( u - u _h  \right) \right\rVert
  &\leq C \left( E \left( \mathrm{d} u \right) + \eta \left[ E \left(
        \mathrm{d} \sigma \right) + E (p) \right] \right) ,\\
  \left\lVert u - u _h \right\rVert &\leq C \bigl( E (u) +
  \eta \left[ E \left( \mathrm{d} u \right) + E (\sigma) \right] \\
  &\qquad + \left( \eta ^2 + \delta \right) \left[ E \left( \mathrm{d}
      \sigma \right) + E (p) \right] + \mu E \left( P _{ \mathfrak{B}
    } u \right) \bigr) .
\end{align*}
\end{theorem}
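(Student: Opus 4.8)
The plan is to derive everything from a single instance of \emph{Galerkin orthogonality} and then to peel off the five bounds in precisely the order listed, each feeding into the next. First I would subtract \eqref{eqn:subcomplexProblem} from \eqref{eqn:mixedProblem}, tested against elements of the subcomplex, to obtain the error equations
\begin{align*}
  \langle \sigma - \sigma_h, \tau \rangle - \langle u - u_h, \mathrm{d}\tau\rangle &= 0, & \forall \tau &\in V_h^{k-1},\\
  \langle \mathrm{d}(\sigma - \sigma_h), v\rangle + \langle \mathrm{d}(u - u_h), \mathrm{d}v\rangle + \langle p - p_h, v\rangle &= 0, & \forall v &\in V_h^k,\\
  \langle u - u_h, q\rangle &= 0, & \forall q &\in \mathfrak{H}_h^k.
\end{align*}
Two structural facts then drive the argument: the cochain property $\mathrm{d}\pi_h = \pi_h\mathrm{d}$ (so $\pi_h$ carries $\mathfrak{B}^k$ into $\mathfrak{B}_h^k$), and a discrete Poincar\'e inequality uniform in $h$ — the latter because a $W$-bounded cochain projection is automatically $V$-bounded with the same constant, whence the argument of \autoref{lem:poincareInequality} applies uniformly over the subcomplexes. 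The coarse $V$-norm bounds of \autoref{thm:subcomplex} are available as inputs.

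The first four estimates I would obtain by ``energy'' arguments that identify each discrete unknown as an orthogonal projection of its continuous counterpart. Testing the second error equation against $v \in \mathfrak{B}_h^k$ kills both $\mathrm{d}v$ and the pressure term (since $\mathfrak{B}_h^k \subset \mathfrak{B}^k$ is orthogonal to $p \in \mathfrak{H}^k$ and to $p_h \in \mathfrak{H}_h^k$), leaving $\mathrm{d}(\sigma - \sigma_h) \perp \mathfrak{B}_h^k$; as $\mathrm{d}\sigma_h \in \mathfrak{B}_h^k$, this makes $\mathrm{d}\sigma_h$ the $W$-orthogonal projection of $\mathrm{d}\sigma$, so $\lVert \mathrm{d}(\sigma - \sigma_h)\rVert \le \lVert (I-\pi_h)\mathrm{d}\sigma\rVert \le C\,E(\mathrm{d}\sigma)$ using $\mathrm{d}\pi_h\sigma = \pi_h\mathrm{d}\sigma \in \mathfrak{B}_h^k$ and uniform $W$-boundedness. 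For $\lVert \sigma - \sigma_h\rVert$ I would write $\sigma - \sigma_h = (\sigma - \pi_h\sigma) + (\pi_h\sigma - \sigma_h)$, bound the first piece by $E(\sigma)$, and split the discrete remainder along the discrete Hodge decomposition: its $\mathfrak{Z}_h^\perp$-part is controlled via discrete Poincar\'e by $\lVert \mathrm{d}(\sigma - \sigma_h)\rVert \le C\,E(\mathrm{d}\sigma)$, while its cocycle part is handled by a duality argument producing the factor $\eta$ (which involves $\mathrm{d}^\ast K$). The harmonic error $\lVert p - p_h\rVert$ and the coboundary error $\lVert \mathrm{d}(u - u_h)\rVert$ go the same way, with the discrepancy between $\mathfrak{H}^k$ and $\mathfrak{H}_h^k$ absorbed into $\mu$.

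The heart of the proof — and the main obstacle — is the bound for $\lVert u - u_h\rVert$, which requires the full Aubin--Nitsche duality argument and is where the compactness property is indispensable. I would introduce the dual mixed problem with right-hand side equal to the relevant component of $u - u_h$; because the inclusion $V^k \cap V_k^\ast \hookrightarrow W^k$ is compact, the solution operator $K$ — together with $\mathrm{d}K$, $\mathrm{d}^\ast K$, and $P_{\mathfrak{H}}$ — is compact, so the error operators $(I-\pi_h)K$, $(I-\pi_h)\mathrm{d}K$, $(I-\pi_h)\mathrm{d}^\ast K$, and $(I-\pi_h)P_{\mathfrak{H}}$ have operator norms $\delta$, $\eta$, $\eta$, $\mu$ that tend to $0$ as $h \to 0$ (by Banach--Steinhaus, since $I - \pi_h \to 0$ pointwise and the convergence is uniform on relatively compact ranges). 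Pairing the dual solution against the primal error and invoking the Galerkin orthogonality above converts the quadratic quantity $\lVert u - u_h\rVert^2$ into a sum of products of these small coefficients with the best-approximation errors $E(\cdot)$, which yields the stated estimate after collecting terms.

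The delicate part throughout is the bookkeeping: each error component must be paired against exactly the right piece of the dual solution so that the correct small coefficient ($\delta$, $\eta$, or $\mu$) multiplies the correct best-approximation term, and the previously established estimates must be substituted in the listed order to avoid circularity. I expect the $\lVert u - u_h\rVert$ bound to be the most demanding, both because it accumulates all of the earlier estimates and because it is the only one carrying the $\delta$-term; the remaining work is the routine but careful verification that $\delta,\eta,\mu \to 0$ and that the discrete harmonic spaces are close enough to their continuous counterparts for the duality pairings to be legitimate.
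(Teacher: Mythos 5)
This theorem is one the paper itself never proves: it is imported verbatim from \citet{ArFaWi2010} (their Theorem 3.11) as background material, so there is no internal proof to compare against, and your sketch has to be judged against the original Arnold--Falk--Winther argument. At the level of strategy you have reproduced that argument's skeleton correctly: Galerkin orthogonality, identification of $\mathrm{d}\sigma_h$ as a $W$-orthogonal projection of $\mathrm{d}\sigma$, and Aubin--Nitsche duality built on the compactness of the solution operator to bring in the coefficients $\delta$, $\eta$, $\mu$. Your first estimate is in fact carried out completely and correctly: testing against $v \in \mathfrak{B}_h^k$ kills $\mathrm{d}v$ and both harmonic terms, and combining the resulting orthogonality with $\mathrm{d}\pi_h\sigma = \pi_h \mathrm{d}\sigma \in \mathfrak{B}_h^k$ and uniform $W$-boundedness gives $\lVert \mathrm{d}(\sigma - \sigma_h)\rVert \le C E(\mathrm{d}\sigma)$. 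Your side remark that a $W$-bounded cochain projection is automatically $V$-bounded is also correct.

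There is, however, a concrete error at the foundation, plus a large amount of asserted-but-undone work. Your third error equation, $\langle u - u_h, q\rangle = 0$ for all $q \in \mathfrak{H}_h^k$, is false: the continuous solution satisfies $u \perp \mathfrak{H}^k$, not $u \perp \mathfrak{H}_h^k$, and since in general $\mathfrak{H}_h^k \not\subset \mathfrak{H}^k$, the correct residual is $\langle u - u_h, q\rangle = \langle u, q\rangle = \langle P_{\mathfrak{B}} u, (I - P_{\mathfrak{H}})q\rangle$, which is nonzero precisely when the discrete and continuous harmonic spaces differ. This is not a removable bookkeeping slip: that residual is exactly the source of the term $\mu E( P_{\mathfrak{B}} u )$ in the fifth estimate, so a duality argument that ``invokes the Galerkin orthogonality above'' (as your $\lVert u - u_h\rVert$ step does) is built on an identity that would make the $\mu$-term disappear, and cannot produce the stated bound; the harmonic mismatch must be carried explicitly through the pairings, not absorbed after the fact. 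Beyond this, estimates two through five are named rather than derived: ``handled by a duality argument producing the factor $\eta$'' and ``converts $\lVert u - u_h\rVert^2$ into a sum of products'' describe the desired outcome without exhibiting the dual problems, the Hodge-decomposition splitting of each error component, or the specific pairings that attach $\delta$, $\eta$, $\mu$ to the correct $E(\cdot)$ factors --- and in the original proof this bookkeeping, which you yourself flag as the delicate part, constitutes essentially all of the work.
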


For typical applications to the de~Rham complex, $ V _h ^k $ consists
of piecewise polynomials defined on a mesh.  In this case, the order
of these coefficients is given by $ \eta = O (h) $, $ \delta = O
\left( h ^{ \min(2, r+1) } \right) $, and $ \mu = O \left( h ^{ r + 1
  } \right) $, where $r$ is the largest degree of complete polynomials
in $ V _h ^k $ (\citet[p.~312]{ArFaWi2010}).

\subsection{Variational crimes}
\label{sec:linearCrimes}
More generally, suppose that the discrete complex $ V _h $ is not
necessarily a subcomplex of $V$, but that we merely have a $W$-bounded
inclusion map $ i _h \colon V _h \hookrightarrow V $, which is a
morphism of Hilbert complexes.  Furthermore, given the $V$-bounded
projection map $ \pi _h \colon V \rightarrow V _h $, we require that $
\pi _h ^k \circ i _h ^k = \operatorname{id} _{ V _h ^k } $ for each
$k$ (which corresponds to the idempotence of $ \pi _h $ when $ i _h $
is simply the inclusion of a subcomplex $ V _h \subset V $).  When $ i
_h $ is unitary---that is, when the discrete inner product satisfies $
\left\langle u _h , v _h \right\rangle _h = \left\langle i _h u _h , i
  _h v _h \right\rangle $ for all $ u _h , v _h \in V _h ^k $---then
this is precisely equivalent to considering the subcomplex $ i _h V _h
\subset V $.  However, if $ i _h $ is not necessarily unitary, we have
a generalized version of the discrete variational problem
\eqref{eqn:subcomplexProblem}, stated as follows: Find $ \left( \sigma
  _h , u _h , p _h \right) \in V _h ^{ k - 1 } \times V _h ^k \times
\mathfrak{H} _h ^k $ satisfying
\begin{equation}
  \label{eqn:discreteProblem}
  \begin{alignedat}{2}
    \left\langle \sigma _h , \tau _h \right\rangle _h - \left\langle u
      _h , \mathrm{d} _h \tau _h \right\rangle _h &= 0, &\quad \forall
    \tau _h &\in V _h ^{ k - 1 } ,\\
    \left\langle \mathrm{d} _h \sigma _h , v _h \right\rangle _h +
    \left\langle \mathrm{d} _h u _h , \mathrm{d} _h v _h \right\rangle
    _h + \left\langle p _h , v _h \right\rangle _h &= \left\langle f
      _h , v _h \right\rangle _h , &\quad \forall v _h
    &\in V _h ^k , \\
    \left\langle u _h , q _h \right\rangle _h &= 0 , &\quad \forall q
    _h &\in \mathfrak{H} _h ^k .
  \end{alignedat}
\end{equation} 
The additional error in this generalized discretization, relative to
the problem on the subcomplex $ i _h V _h \subset V $, arises from two
particular variational crimes: one resulting from the failure of $ i
_h $ to be unitary, and another resulting from the difference between
$ f _h $ and $ i _h ^\ast f $.

In \citet{HoSt2010}, we analyze this additional error by introducing a
modified problem on $ V _h $, which is equivalent to the subcomplex
problem on $ i _h V _h \subset V $.  Define $ J _h = i _h ^\ast i _h
$, so that for any $ u _h , v _h \in W _h $, we have $ \left\langle i
  _h u _h , i _h v _h \right\rangle = \left\langle i _h ^\ast i _h u
  _h , v _h \right\rangle _h = \left\langle J _h u _h , v _h
\right\rangle _h $.  (The norm $ \left\lVert I - J _h \right\rVert $,
therefore, quantifies the failure of $ i _h $ to be unitary.)  This
defines a modified inner product on $ W _h ^k $, leading to a modified
Hodge decomposition $ W _h ^k = \mathfrak{B} _h ^k \oplus \mathfrak{H}
_h ^{\prime k} \oplus \mathfrak{Z} _h ^{\smash{ k \perp\prime_W }} $,
where
\begin{equation*}
  \mathfrak{H} _h ^{\prime k} = \left\{ z \in \mathfrak{Z}  _h
    ^k \;\middle\vert\; i _h z \perp i _h \mathfrak{B}  _h ^k
  \right\}, \qquad \mathfrak{Z} _h ^{\smash{k \perp \prime _W
    }} = \left\{ v \in W _h ^k \;\middle\vert\; i _h v \perp i _h
    \mathfrak{Z}  _h ^k   \right\} .
\end{equation*}
Then the subcomplex problem is equivalent to the following mixed
problem: Find $ \left( \sigma _h ^\prime , u _h ^\prime , p _h ^\prime
\right) \in V _h ^{ k - 1 } \times V _h ^k \times \mathfrak{H} _h ^{
  \prime k } $ satisfying
\begin{equation}
  \label{eqn:modifiedProblem}
  \begin{alignedat}{2}
    \left\langle J _h \sigma _h ^\prime , \tau _h \right\rangle _h -
    \left\langle J _h u _h ^\prime , \mathrm{d} _h \tau _h
    \right\rangle _h &= 0, &\quad \forall
    \tau _h &\in V _h ^{ k - 1 } ,\\
    \left\langle J _h \mathrm{d} _h \sigma _h ^\prime , v _h
    \right\rangle _h + \left\langle J _h \mathrm{d} _h u _h ^\prime ,
      \mathrm{d} _h v _h \right\rangle _h + \left\langle J _h p
      ^\prime _h , v _h \right\rangle _h &= \left\langle i _h ^\ast f
      , v _h \right\rangle _h , &\quad \forall v _h
    &\in V _h ^k , \\
    \left\langle J _h u _h ^\prime , q _h ^\prime \right\rangle _h &=
    0 , &\quad \forall q _h ^\prime &\in \mathfrak{H} _h ^{\prime k }
    .
  \end{alignedat}
\end{equation}
The additional error, between the generalized problem
\eqref{eqn:discreteProblem} and the subcomplex problem
\eqref{eqn:modifiedProblem}, is estimated in the following theorem.

\begin{theorem}[\citet{HoSt2010}, Theorem 3.9]
  \label{thm:HoSt2010-3.9}
  Suppose that $ \left( \sigma _h , u _h , p _h \right) \in V _h ^{ k
    - 1 } \times V _h ^k \times \mathfrak{H} _h ^k $ is a solution to
  \eqref{eqn:discreteProblem} and $ \left( \sigma _h ^\prime , u _h
    ^\prime , p _h ^\prime \right) \in V _h ^{ k - 1 } \times V _h ^k
  \times \mathfrak{H} _h ^{ \prime k } $ is a solution to
  \eqref{eqn:modifiedProblem}.  Then
  \begin{equation*}
    \left\lVert \sigma _h - \sigma _h ^\prime \right\rVert _{ V _h } +
    \left\lVert u _h - u _h ^\prime \right\rVert _{ V _h } +
    \left\lVert p _h - p _h ^\prime \right\rVert _h \leq C \left(
      \left\lVert f _h - i _h ^\ast f \right\rVert _h + \left\lVert I
        - J _h \right\rVert \left\lVert f \right\rVert \right) .
  \end{equation*}
\end{theorem}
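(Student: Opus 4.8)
The plan is to treat the discrete problem \eqref{eqn:discreteProblem} as a perturbation of the well-posed modified problem \eqref{eqn:modifiedProblem} and to run a Strang-type stability argument. Both systems are mixed variational problems posed on the same finite-dimensional space $V_h^{k-1} \times V_h^k$ together with a harmonic component; they differ only in three respects: (i) the inner product, since \eqref{eqn:modifiedProblem} weights every pairing by $J_h$ whereas \eqref{eqn:discreteProblem} uses the plain $\langle \cdot, \cdot \rangle_h$, (ii) the right-hand side, $i_h^\ast f$ versus $f_h$, and (iii) the harmonic space, $\mathfrak{H}_h^{\prime k}$ versus $\mathfrak{H}_h^k$. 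The first step is to record that \eqref{eqn:modifiedProblem} is well-posed with an inf-sup constant uniform in $h$: since the modified inner product $\langle J_h \cdot, \cdot \rangle_h$ coincides with $\langle i_h \cdot, i_h \cdot \rangle$, problem \eqref{eqn:modifiedProblem} is exactly the subcomplex problem on $i_h V_h \subset V$, so the Arnold--Falk--Winther well-posedness theorem supplies such a constant (uniformity following from the uniformly $V$-bounded cochain projections assumed throughout the framework).

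Next, writing $e_\sigma = \sigma_h - \sigma_h'$, $e_u = u_h - u_h'$, $e_p = p_h - p_h'$ and denoting by $B_h'$ the bilinear form of \eqref{eqn:modifiedProblem}, the inf-sup stability yields
\[
  \lVert e_\sigma \rVert_{V_h} + \lVert e_u \rVert_{V_h} + \lVert e_p \rVert_h
  \le C \sup \frac{ B_h'(\sigma_h, u_h, p_h; \tau_h, v_h, q_h) - \langle i_h^\ast f, v_h \rangle_h }{ \lVert \tau_h \rVert_{V_h} + \lVert v_h \rVert_{V_h} + \lVert q_h \rVert_h },
\]
the supremum running over $V_h^{k-1} \times V_h^k \times \mathfrak{H}_h^{\prime k}$. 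The numerator is the residual produced by inserting the solution of \eqref{eqn:discreteProblem} into the modified problem. Using the first two equations of \eqref{eqn:discreteProblem} to cancel the plain-inner-product parts, this residual collapses into three pieces: the inner-product discrepancy $\langle (J_h - I)\,\cdot, \cdot \rangle_h$ left over in each of the first two equations, the right-hand-side discrepancy $\langle f_h - i_h^\ast f, v_h \rangle_h$, and the third-equation term $-\langle J_h u_h, q_h \rangle_h$ coming from testing against $q_h \in \mathfrak{H}_h^{\prime k}$ rather than $\mathfrak{H}_h^k$.

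I would then bound these pieces in turn. The inner-product terms are each controlled by $\lVert I - J_h \rVert$ times $V_h$-norms of $(\sigma_h, u_h, p_h)$; the right-hand-side term is controlled directly by $\lVert f_h - i_h^\ast f \rVert_h$; and the harmonic mismatch is handled by observing that $\mathfrak{H}_h^{\prime k}$ and $\mathfrak{H}_h^k$ differ by a bounded operator of norm $O(\lVert I - J_h \rVert)$, whence $\langle J_h u_h, q_h \rangle_h$ is again $O(\lVert I - J_h \rVert)$ times the solution norm once the orthogonality $\langle u_h, \tilde q \rangle_h = 0$ for $\tilde q \in \mathfrak{H}_h^k$ is used. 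Finally, the discrete well-posedness of \eqref{eqn:discreteProblem} gives $\lVert \sigma_h \rVert_{V_h} + \lVert u_h \rVert_{V_h} + \lVert p_h \rVert_h \le C \lVert f_h \rVert_h$, and since $\lVert f_h \rVert_h \le \lVert f_h - i_h^\ast f \rVert_h + \lVert i_h^\ast f \rVert_h \le \lVert f_h - i_h^\ast f \rVert_h + \lVert i_h \rVert \lVert f \rVert$ with $\lVert i_h \rVert$ uniformly bounded, the solution norms may be replaced by $C \lVert f \rVert$, assembling to the asserted bound.

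The main obstacle I anticipate is precisely the harmonic-space mismatch in item (iii): because $p_h \in \mathfrak{H}_h^k$ while $p_h' \in \mathfrak{H}_h^{\prime k}$, the error component $e_p$ is not admissible in either space, so the inf-sup estimate must be applied to $(\sigma_h, u_h, P' p_h) - (\sigma_h', u_h', p_h')$, where $P'$ projects onto $\mathfrak{H}_h^{\prime k}$, with the remainder $p_h - P' p_h$ estimated separately by the harmonic tilt. Quantifying the gap between the two harmonic spaces by $\lVert I - J_h \rVert$, verifying that it enters the residual with exactly the right power, and tracking the $h$-uniformity of every constant so that the final $C$ is genuinely independent of $h$, are the delicate points of the argument.
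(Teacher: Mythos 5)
The paper you are reading never proves \autoref{thm:HoSt2010-3.9}: it is imported by citation, and the proof lives in \citet{HoSt2010}, where it follows exactly the strategy you outline---a Strang-type stability-plus-consistency argument in which one of the two solutions is tested against the other problem's uniformly inf-sup-stable bilinear form, the residual splitting into the $(I-J_h)$ inner-product terms, the data term $f_h - i_h^\ast f$, and a harmonic-mismatch term. Your plan is therefore sound, and the one ingredient you assert rather than prove---that $\mathfrak{H}_h^k$ and $\mathfrak{H}_h^{\prime k}$ tilt by at most $O(\lVert I - J_h \rVert)$---is precisely the auxiliary gap lemma established there beforehand; it is a one-line computation: for $q \in \mathfrak{H}_h^{\prime k}$ one has $q - P_{\mathfrak{H}_h} q = P_{\mathfrak{B}_h} q$ and $\lVert P_{\mathfrak{B}_h} q \rVert_h^2 = \langle q, P_{\mathfrak{B}_h} q \rangle_h = \langle (I - J_h) q, P_{\mathfrak{B}_h} q \rangle_h$, since $\langle J_h q, b \rangle_h = \langle i_h q, i_h b \rangle = 0$ for all $b \in \mathfrak{B}_h^k$, giving the constant $\lVert I - J_h \rVert$ exactly. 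Two consequences of your particular direction of insertion deserve to be made explicit: first, the gap bound in the direction you need it (projecting $\mathfrak{H}_h^k$ onto $\mathfrak{H}_h^{\prime k}$) requires $J_h$ to be uniformly positive definite, i.e.\ $\lVert I - J_h \rVert$ bounded away from $1$---harmless, because when $\lVert I - J_h \rVert$ is of order one the asserted estimate is trivially true from the separate a priori bounds on the two solutions; second, since you bound the residual by the norms of $(\sigma_h, u_h, p_h)$, you must invoke the uniform well-posedness of \eqref{eqn:discreteProblem} itself (which rests on the discrete Poincar\'e inequality for $(V_h, \langle \cdot,\cdot \rangle_h)$, also proved in \citet{HoSt2010}) and route through $\lVert f_h \rVert_h \le \lVert f_h - i_h^\ast f \rVert_h + \lVert i_h \rVert \lVert f \rVert$, as you do; inserting the modified solution into \eqref{eqn:discreteProblem} instead would let one use the bound $C \lVert f \rVert$ directly. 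These are bookkeeping differences, not a different proof.
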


Using the triangle inequality, together with the previously stated
result of Arnold, Falk, and Winther (\autoref{thm:subcomplex}) for the
subcomplex problem, we immediately get the following corollary.

\begin{corollary}[\citet{HoSt2010}, Corollary 3.10]
  \label{cor:HoSt2010-3.10}
  If $ \left( \sigma , u , p \right) \in V ^{ k - 1 } \times V ^k
  \times \mathfrak{H} ^k $ is a solution to \eqref{eqn:mixedProblem}
  and $ \left( \sigma _h , u _h , p _h \right) \in V _h ^{ k - 1 }
  \times V _h ^k \times \mathfrak{H} _h ^k $ is a solution to
  \eqref{eqn:discreteProblem}, then
    \begin{multline*}
      \left\lVert \sigma - i _h \sigma _h \right\rVert _V +
      \left\lVert u - i _h u _h \right\rVert _V + \left\lVert p - i _h
        p _h \right\rVert \\
      \leq C \bigl( \inf _{ \tau \in i _h V _h ^{ k - 1 } }
      \left\lVert \sigma - \tau \right\rVert _V + \inf _{ v \in i _h V
        _h ^k } \left\lVert u - v \right\rVert _V + \inf _{ q \in i _h
        V _h ^k } \left\lVert p - q \right\rVert _V + \mu \inf _{ v
        \in i _h V _h
        ^k } \left\lVert P _{ \mathfrak{B} } u - v \right\rVert _V \\
      + \left\lVert f _h - i _h ^\ast f \right\rVert _h + \left\lVert
        I - J _h \right\rVert \left\lVert f \right\rVert \bigr) ,
  \end{multline*}
  where $\mu$ is defined as in \autoref{thm:subcomplex}.
\end{corollary}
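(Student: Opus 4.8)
The plan is to obtain the estimate by a single application of the triangle inequality, using the modified problem \eqref{eqn:modifiedProblem} as an intermediary between the exact solution and the generalized discrete solution. Since \eqref{eqn:modifiedProblem} was introduced precisely because it is equivalent to the mixed problem \eqref{eqn:subcomplexProblem} posed on the subcomplex $ i _h V _h \subset V $, its solution $ \left( \sigma _h ^\prime , u _h ^\prime , p _h ^\prime \right) $ transfers under $ i _h $ to the solution $ \left( i _h \sigma _h ^\prime , i _h u _h ^\prime , i _h p _h ^\prime \right) $ of the subcomplex problem. I would therefore insert this intermediate object and write, for the $\sigma$-component,
\begin{equation*}
  \left\lVert \sigma - i _h \sigma _h \right\rVert _V \leq \left\lVert \sigma - i _h \sigma _h ^\prime \right\rVert _V + \left\lVert i _h \left( \sigma _h ^\prime - \sigma _h \right) \right\rVert _V ,
\end{equation*}
and analogously for the $u$-component (in the $V$-norm) and the $p$-component (in the $W$-norm).

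The first group of terms, $ \left\lVert \sigma - i _h \sigma _h ^\prime \right\rVert _V + \left\lVert u - i _h u _h ^\prime \right\rVert _V + \left\lVert p - i _h p _h ^\prime \right\rVert $, is exactly the error between the exact solution and the solution of \eqref{eqn:subcomplexProblem} on the subcomplex $ i _h V _h $.  I would bound it directly by \autoref{thm:subcomplex}, which produces precisely the four best-approximation infima appearing on the right-hand side, with each infimum taken over $ i _h V _h $.  Here one uses that the family $ i _h V _h $ admits uniformly $V$-bounded cochain projections, which is furnished by the given maps $ \pi _h $ together with the compatibility relation $ \pi _h ^k \circ i _h ^k = \operatorname{id} _{ V _h ^k } $.

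The second group, $ \left\lVert i _h \left( \sigma _h ^\prime - \sigma _h \right) \right\rVert _V + \left\lVert i _h \left( u _h ^\prime - u _h \right) \right\rVert _V + \left\lVert i _h \left( p _h ^\prime - p _h \right) \right\rVert $, measures the discrepancy between the modified problem \eqref{eqn:modifiedProblem} and the generalized discrete problem \eqref{eqn:discreteProblem}, which is controlled by \autoref{thm:HoSt2010-3.9}.  Because that theorem is stated in the discrete $ V _h $- and $h$-norms, I would first transfer the norms: from $ \left\lVert i _h w _h \right\rVert ^2 = \left\langle J _h w _h , w _h \right\rangle _h \leq \lVert i _h \rVert ^2 \left\lVert w _h \right\rVert _h ^2 $ together with the commutation $ \mathrm{d} \, i _h = i _h \mathrm{d} _h $, one obtains $ \left\lVert i _h w _h \right\rVert _V \leq \lVert i _h \rVert \left\lVert w _h \right\rVert _{ V _h } $.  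Applying \autoref{thm:HoSt2010-3.9} then bounds this second group by $ C \left( \left\lVert f _h - i _h ^\ast f \right\rVert _h + \left\lVert I - J _h \right\rVert \left\lVert f \right\rVert \right) $, and summing the two groups yields the stated inequality.

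Since the triangle inequality and the norm-transfer estimate are routine, the only point deserving care is the identification invoked in the first paragraph: one must confirm that the $ i _h $-image of the modified solution genuinely solves \eqref{eqn:subcomplexProblem} on $ i _h V _h $ (in particular that $ i _h \mathfrak{H} _h ^{ \prime k } $ is the harmonic space of the subcomplex), so that \autoref{thm:subcomplex} applies verbatim with its infima over $ i _h V _h $ and its coefficient $\mu$.  This is exactly the equivalence established when \eqref{eqn:modifiedProblem} was introduced, so no genuine obstacle remains, and the corollary follows immediately.
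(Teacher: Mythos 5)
Your proposal is correct and follows essentially the same route as the paper: the paper obtains the corollary by a triangle inequality that combines \autoref{thm:subcomplex} (applied to the subcomplex problem on $ i_h V_h $, equivalently the modified problem \eqref{eqn:modifiedProblem}) with \autoref{thm:HoSt2010-3.9} controlling the discrepancy between \eqref{eqn:discreteProblem} and \eqref{eqn:modifiedProblem}. Your additional care about transferring discrete norms through the $W$-bounded map $ i_h $ and about the identification of $ i_h \mathfrak{H}_h^{\prime k} $ with the harmonic space of the subcomplex simply makes explicit what the paper treats as immediate.
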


This raises the question of how to choose $ f _h \in W _h
$ such that $ f _h \rightarrow i _h ^\ast f $ as $ h \rightarrow 0 $.
While $ f _h = i _h ^\ast f $ would be the ideal choice, of course, it
may be difficult to compute the inner product on $W$, and hence to
compute the adjoint $ i _h ^\ast $.  The following result shows that,
if $ \Pi _h \colon W ^k \rightarrow W _h ^k $ is any bounded linear
projection (i.e., satisfying $ \Pi _h \circ i _h ^k = \mathrm{id} _{ W
  _h ^k } $), then choosing $ f _h = \Pi _h f $ is sufficient to
control this term.

\begin{theorem}[\citet{HoSt2010}, Theorem 3.11]
  \label{thm:HoSt2010-3.11}
  If $ \Pi _h \colon W ^k \rightarrow W _h ^k $ is a family of linear
  projections, bounded uniformly with respect to $h$, then we have the
  inequality
  \begin{equation*}
    \left\lVert \Pi _h f - i _h ^\ast f \right\rVert _h \leq C \bigl(
    \left\lVert I - J _h \right\rVert \left\lVert f \right\rVert +
    \displaystyle\inf _{ \phi  \in i _h W _h ^k } \left\lVert f - \phi 
    \right\rVert \bigr) .
  \end{equation*} 
\end{theorem}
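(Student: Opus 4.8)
The plan is to exploit the two left-inverse-like identities available for $i_h$: the projection property $\Pi_h \circ i_h = \operatorname{id}_{W_h^k}$ and the defining relation $i_h^\ast i_h = J_h$. These two maps agree up to the operator $I - J_h$, which is exactly the quantity appearing on the right-hand side, so the strategy is to insert an arbitrary approximant $\phi = i_h \psi_h \in i_h W_h^k$ and measure the resulting defect.

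First I would fix any $\phi \in i_h W_h^k$ and write $\phi = i_h \psi_h$ with $\psi_h \in W_h^k$; applying the projection identity gives $\psi_h = \Pi_h \phi$. Splitting $f = \phi + (f - \phi)$ and using linearity, one has $\Pi_h \phi = \psi_h$ whereas $i_h^\ast \phi = i_h^\ast i_h \psi_h = J_h \psi_h$, so that
\begin{equation*}
  \Pi_h f - i_h^\ast f = (I - J_h)\psi_h + \Pi_h(f - \phi) - i_h^\ast (f - \phi) .
\end{equation*}
The triangle inequality then leaves three terms to control.

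Next I would estimate these terms separately. Since the family $\Pi_h$ is uniformly bounded by hypothesis, $\lVert \Pi_h(f-\phi) \rVert_h \le C \lVert f - \phi \rVert$; likewise, because $i_h$ is $W$-bounded uniformly in $h$ and the Hilbert-space adjoint of a bounded map has the same operator norm, $\lVert i_h^\ast(f - \phi) \rVert_h \le C \lVert f - \phi \rVert$. For the defect term, $\lVert (I - J_h)\psi_h \rVert_h \le \lVert I - J_h \rVert \, \lVert \psi_h \rVert_h$, and since $\psi_h = \Pi_h \phi$ the uniform bound on $\Pi_h$ yields $\lVert \psi_h \rVert_h \le C \lVert \phi \rVert \le C (\lVert f \rVert + \lVert f - \phi \rVert)$.

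The main obstacle---and the only genuine subtlety---is that this last estimate produces a spurious $\lVert I - J_h \rVert \, \lVert f - \phi \rVert$ contribution rather than the desired $\lVert I - J_h \rVert \, \lVert f \rVert$. To absorb it, I would observe that $J_h = i_h^\ast i_h$ is uniformly bounded (again from the $W$-boundedness of $i_h$), so that $\lVert I - J_h \rVert$ is bounded by a constant independent of $h$ and the cross term folds into the approximation term. Collecting the estimates gives, for every $\phi \in i_h W_h^k$,
\begin{equation*}
  \lVert \Pi_h f - i_h^\ast f \rVert_h \le C \lVert I - J_h \rVert \, \lVert f \rVert + C \lVert f - \phi \rVert .
\end{equation*}
Since the left-hand side does not depend on $\phi$, I would finally take the infimum over $\phi \in i_h W_h^k$ on the right, which produces the claimed bound.
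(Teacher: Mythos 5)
Your proposal is correct, and it follows essentially the same route as the original argument in \citet{HoSt2010} (the present paper does not reprove this result, it only cites it): everything rests on the two identities $\Pi_h \circ i_h = \operatorname{id}_{W_h^k}$ and $i_h^\ast i_h = J_h$, the insertion of an arbitrary approximant $\phi = i_h \psi_h \in i_h W_h^k$, the triangle inequality, and uniform boundedness of the maps involved. The one point worth making explicit is that your bound on $\lVert i_h^\ast (f - \phi) \rVert_h$ and your absorption of the cross term $\lVert I - J_h \rVert \, \lVert f - \phi \rVert$ both invoke the uniform $W$-boundedness of $i_h$ with respect to $h$, which is a standing assumption of the variational-crimes framework of \autoref{sec:linearCrimes} rather than a hypothesis listed in the theorem statement itself, so it should be flagged as such when invoked.
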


Thus, if the family of discrete complexes satisfies the
``well-approximating'' condition, and if $ \left\lVert I - J _h
\right\rVert \rightarrow 0 $ as $ h \rightarrow 0 $, then it follows
that the generalized discrete solution converges to the continuous
solution.

\section{Semilinear mixed problems}
\label{sec:semilinear}

\subsection{An alternative approach to the linear problem}
In this subsection, we introduce a slightly modified approach to the
linear problem, which will be more useful in the nonlinear analysis to
follow.

Consider the linear operator $ \mathbf{L} = L \oplus P _{ \mathfrak{H}
} \colon D _L \rightarrow W ^k $.  Given any $ \mathbf{u} \in {D} _L
$, we can orthogonally decompose $ \mathbf{u} = u + p $, where $ p = P
_{ \mathfrak{H} } \mathbf{u} $ and $ u = \mathbf{u} - p $.  Therefore,
\begin{equation*}
  \mathbf{L} \mathbf{u} = L \mathbf{u}  + P _{ \mathfrak{H}
  } \mathbf{u}  = L u + p ,
\end{equation*}
so given some $ f \in W ^k $, solving $ L u + p = f $ is equivalent to
solving $ \mathbf{L} \mathbf{u} = f $.  Furthermore, if we define the
solution operator $ \mathbf{K} = K \oplus P _{ \mathfrak{H} } $, it
follows that
\begin{equation*}
  \mathbf{K} f = K f + P _{ \mathfrak{H}  } f = u + p = \mathbf{u} ,
\end{equation*}
so $\mathbf{K}$ is in fact the inverse of $\mathbf{L}$.  Thus,
$\mathbf{L}$ and $\mathbf{K}$ establish a bijection between $ D _L $
and $ W ^k $.  Effectively, by adding $ P _{ \mathfrak{H} } $ to each
of the operators $L$ and $K$, we have managed to remove their kernel $
\mathfrak{H} ^k $.

This approach also sheds new light on the well-posedness of the linear
problem.  If $ \mathbf{u} $ is a solution
to $ \mathbf{L} \mathbf{u} = f $, then it satisfies the variational
problem: Find $ \mathbf{u} \in V ^k \cap V _k ^\ast $ such that
\begin{equation}
  \label{eqn:unmixedProblem}
  \left\langle \mathrm{d} ^\ast \mathbf{u} , \mathrm{d} ^\ast v
  \right\rangle + \left\langle \mathrm{d} \mathbf{u} , \mathrm{d} v
  \right\rangle + \left\langle P _{ \mathfrak{H}  } \mathbf{u} , P _{
      \mathfrak{H}  } v \right\rangle = \left\langle f, v
  \right\rangle , \quad \forall v \in V ^k \cap V _k ^\ast .
\end{equation}
In fact, the left-hand side is precisely the inner product $
\left\langle \mathbf{u}, v \right\rangle _{ V \cap V ^\ast } $, which
is equivalent to the usual intersection inner product obtained by
adding the inner products for $V$ and $ V ^\ast $
(\citet[p.~312]{ArFaWi2010}).  Hence, by the Riesz representation
theorem, a unique solution $ \mathbf{u} = \mathbf{K} f $ exists, and
moreover $\mathbf{K}$ is bounded.  In particular, this variational
formulation also illustrates that $\mathbf{K}$ is the adjoint to the
bounded inclusion $ \mathcal{I} \colon V ^k \cap V ^\ast _k
\hookrightarrow W ^k $, with respect to this $ \left\langle \cdot ,
  \cdot \right\rangle _{ V \cap V ^\ast } $ inner product, and thus $
\mathbf{K} $ must be bounded as well.

\begin{remark}
  While the solutions to the two variational problems
  \eqref{eqn:mixedProblem} and \eqref{eqn:unmixedProblem} are
  equivalent, the mixed formulation is still preferable for
  implementing finite element methods, since one may not have
  efficient finite elements for the space $ V ^k \cap V _k ^\ast $.
  We emphasize that this alternative approach is introduced primarily
  to make the analysis of semilinear problems more convenient.
\end{remark}

\subsection{Semilinear problems and the abstract Hammerstein equation}
Given some $ f \in W ^k $, we are interested in the semilinear problem
of finding $\mathbf{u}$, such that
\begin{equation}
  \label{eqn:semilinearOperator}
  \mathbf{L} \mathbf{u} + F \mathbf{u} = f,
\end{equation}
where $ F \colon V ^k \rightarrow W ^k $ is some nonlinear operator.
Extending the argument from the linear case, it follows that this
operator equation is equivalent to the mixed variational problem: Find
$ \left( \sigma ,u , p \right) \in V ^{k-1} \times V ^k \times
\mathfrak{H} ^k $ satisfying
\begin{equation}
  \label{eqn:mixedSemilinear}
  \begin{alignedat}{2}
    \left\langle \sigma, \tau \right\rangle - \left\langle u ,
      \mathrm{d} \tau \right\rangle &= 0, \quad &\forall \tau &\in V
    ^{ k - 1 } \\
    \left\langle \mathrm{d} \sigma , v \right\rangle + \left\langle
      \mathrm{d} u , \mathrm{d} v \right\rangle + \left\langle p, v
    \right\rangle + \left\langle F \left( u + p \right) , v
    \right\rangle &= \left\langle f, v \right\rangle , \quad &\forall
    v &\in V ^k ,\\
    \left\langle u, q \right\rangle &= 0 , \quad &\forall q &\in
    \mathfrak{H} ^k .
  \end{alignedat}
\end{equation}
In the special case where $F = 0 $, this simply reduces to the linear
problem.

Using the solution operator $\mathbf{K}$, the equation
\eqref{eqn:semilinearOperator} is also equivalent to
\begin{equation}
  \label{eqn:hammerstein}
  \mathbf{u} + \mathbf{K} F \mathbf{u} = \mathbf{K} f .
\end{equation}
Equations having this general form are called \emph{abstract
  Hammerstein equations}, and are of particular interest in nonlinear
functional analysis (cf.~\citet{Zeidler1990b}).  This formulation,
which notably appeared in the seminal papers of
\citet{Amann1969,Amann1969a} and \citet{BrGu1969}, generalizes certain
nonlinear integral equations, called Hammerstein integral
equations. (In the context of integral equations, the operator
$\mathbf{K}$ corresponds to the kernel operator, or Green's operator.)

\subsection{Well-posedness of the semilinear problem} Before we
establish the well-posedness of the abstract Hammerstein equation
\eqref{eqn:hammerstein}, it is necessary to define some special
properties that a nonlinear operator may have.

\begin{definition}
  The operator $ A \colon W ^k \rightarrow W ^k $ is said to be
  \emph{monotone} if, for all $ u, v \in W ^k $, it satisfies $
  \left\langle A u - A v, u - v \right\rangle \geq 0 $.  It is called
  \emph{strictly monotone} if $ \left\langle A u - A v, u - v
  \right\rangle > 0 $ whenever $ u \neq v $, and \emph{strongly
    monotone} if there exists a constant $ c > 0 $ such that $
  \left\langle A u - A v, u - v \right\rangle \geq c \left\lVert u - v
  \right\rVert ^2 $.
\end{definition}

\begin{definition}
  The operator $ A \colon W ^k \rightarrow W ^k $ is said to be
  \emph{hemicontinuous} if the real function $ t \mapsto \left\langle
    A \left( u + t v \right) , w \right\rangle $ is continuous on $
  \left[ 0, 1 \right] $ for all $ u , v , w \in W ^k $.
\end{definition}

\begin{theorem}
  \label{thm:wellPosed}
  If $F$ is monotone and hemicontinuous, then the semilinear problem
  \eqref{eqn:semilinearOperator} has a unique solution.  Moreover, the
  problem is well-posed: given two functionals $ f $ and $ f ^\prime
  $, the respective solutions $ \mathbf{u} $ and $ \mathbf{u} ^\prime
  $ satisfy the Lipschitz continuity estimate $ \left\lVert \mathbf{u}
    - \mathbf{u} ^\prime \right\rVert _{ V \cap V ^\ast } \leq
  \left\lVert \mathbf{K} \right\rVert \left\lVert f - f ^\prime
  \right\rVert $.
\end{theorem}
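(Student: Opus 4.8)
The plan is to mirror the proof in the linear case---where well-posedness of \eqref{eqn:unmixedProblem} followed from the Riesz representation theorem---but to replace Riesz representation by its natural nonlinear counterpart, the theory of monotone operators. Write $H = V^k \cap V_k^\ast$, equipped with the inner product $\langle \cdot, \cdot \rangle_{V \cap V^\ast}$ of \eqref{eqn:unmixedProblem}, and recall from the preceding discussion that $\mathbf{K}$ is the adjoint of the inclusion $\mathcal{I} \colon H \hookrightarrow W^k$, so that
\[ \langle \mathbf{K} g, v \rangle_{V \cap V^\ast} = \langle g, v \rangle, \qquad \forall\, g \in W^k,\ v \in H. \]
Define $\mathbf{A} = I + \mathbf{K} F \colon H \to H$. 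Applying $\mathbf{K}$ to \eqref{eqn:semilinearOperator} and using $\mathbf{K}\mathbf{L} = I$ on $D_L$, the semilinear problem is equivalent to the abstract Hammerstein equation \eqref{eqn:hammerstein}, i.e., to $\mathbf{A}\mathbf{u} = \mathbf{K} f$; so it suffices to prove that $\mathbf{A}$ is a bijection of $H$.

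First I would verify the hypotheses of the main theorem on monotone operators (the Browder--Minty theorem; cf.~\citet{Zeidler1990b}) for $\mathbf{A}$. The key is the identity $\langle \mathbf{K} F \mathbf{u}, v \rangle_{V \cap V^\ast} = \langle F \mathbf{u}, v \rangle$, which transfers the $W$-monotonicity of $F$ to $\mathbf{K} F$ in the $H$-inner product: taking $v = \mathbf{u} - \mathbf{u}'$,
\[ \langle \mathbf{A}\mathbf{u} - \mathbf{A}\mathbf{u}', \mathbf{u} - \mathbf{u}' \rangle_{V \cap V^\ast} = \lVert \mathbf{u} - \mathbf{u}' \rVert_{V \cap V^\ast}^2 + \langle F\mathbf{u} - F\mathbf{u}', \mathbf{u} - \mathbf{u}' \rangle \geq \lVert \mathbf{u} - \mathbf{u}' \rVert_{V \cap V^\ast}^2, \]
so $\mathbf{A}$ is strongly, hence strictly, monotone; coercivity then follows at once by setting $\mathbf{u}' = 0$ and applying Cauchy--Schwarz to the term $\langle \mathbf{A} 0, \mathbf{u} \rangle_{V \cap V^\ast}$. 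Hemicontinuity of $\mathbf{A}$ reduces, via the same identity, to that of $F$, since $t \mapsto \langle \mathbf{K} F(\mathbf{u} + t\mathbf{v}), \mathbf{w} \rangle_{V \cap V^\ast} = \langle F(\mathbf{u} + t\mathbf{v}), \mathbf{w} \rangle$ is continuous. The Browder--Minty theorem then gives surjectivity of $\mathbf{A}$, while strict monotonicity gives injectivity; hence $\mathbf{A}\mathbf{u} = \mathbf{K} f$ has a unique solution, establishing existence and uniqueness for \eqref{eqn:semilinearOperator}.

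For the well-posedness estimate, given data $f, f'$ with solutions $\mathbf{u}, \mathbf{u}'$, I would subtract $\mathbf{A}\mathbf{u} = \mathbf{K} f$ and $\mathbf{A}\mathbf{u}' = \mathbf{K} f'$ and test against $\mathbf{u} - \mathbf{u}'$. The strong monotonicity bound above controls the left-hand side from below, while the right-hand side is handled by Cauchy--Schwarz and boundedness of $\mathbf{K}$:
\[ \lVert \mathbf{u} - \mathbf{u}' \rVert_{V \cap V^\ast}^2 \leq \langle \mathbf{K}(f - f'), \mathbf{u} - \mathbf{u}' \rangle_{V \cap V^\ast} \leq \lVert \mathbf{K}\rVert\, \lVert f - f' \rVert\, \lVert \mathbf{u} - \mathbf{u}' \rVert_{V \cap V^\ast}. \]
Dividing through by $\lVert \mathbf{u} - \mathbf{u}' \rVert_{V \cap V^\ast}$ yields the claimed estimate $\lVert \mathbf{u} - \mathbf{u}' \rVert_{V \cap V^\ast} \leq \lVert \mathbf{K}\rVert\, \lVert f - f' \rVert$.

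The main obstacle is conceptual bookkeeping rather than computation: $F$ and its monotonicity and hemicontinuity are phrased through the $W$-inner product, whereas $\mathbf{A}$ must be analyzed on $H$ in the $\langle \cdot, \cdot \rangle_{V \cap V^\ast}$ inner product, so every hypothesis has to be re-expressed through the adjoint identity for $\mathbf{K}$ (and one must keep track of the two distinct norms, using that $\lVert \cdot \rVert \leq \lVert \cdot \rVert_{V \cap V^\ast}$ on $H$). A secondary technical point is that the standard statement of the Browder--Minty theorem presumes $\mathbf{A}$ carries bounded sets to bounded sets; this is automatic here, since a monotone, hemicontinuous operator on a Hilbert space is locally bounded and demicontinuous, but it should be invoked explicitly.
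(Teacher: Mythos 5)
Your proposal is correct and follows essentially the same route as the paper: define $A = I + \mathbf{K}F$ on $V^k \cap V_k^\ast$, use the adjoint identity for $\mathbf{K}$ to transfer the $W$-monotonicity and hemicontinuity of $F$ into strong monotonicity (constant $1$) and hemicontinuity of $A$ in the $\left\langle \cdot, \cdot \right\rangle_{V \cap V^\ast}$ inner product, and invoke Browder--Minty. The only cosmetic difference is at the end: the paper cites the nonexpansiveness of $A^{-1}$ (Lipschitz constant $c^{-1} = 1$ from Browder--Minty), whereas you re-derive that same bound explicitly by subtracting the two equations and testing against $\mathbf{u} - \mathbf{u}^\prime$, which amounts to the same computation.
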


The existence/uniqueness portion of the proof is an adaptation of a
standard argument for Hammerstein equations, when the kernel operator
is symmetric and monotone on some real, separable Hilbert space
(cf.~\citet[p.~618]{Zeidler1990b}).

\begin{proof}
  Let us define the operator $ A = I + \mathbf{K} F $ on $ V ^k \cap V
  _k ^\ast $, so that the abstract Hammerstein equation
  \eqref{eqn:hammerstein} can be written as $ A \mathbf{u} =
  \mathbf{K} f $.  Since $F$ is hemicontinuous, it follows that $A$ is
  also hemicontinuous.  Moreover, $A$ is strongly monotone with
  constant $ c = 1 $, since for any $ \mathbf{u} , \mathbf{u} ^\prime
  \in V ^k \cap V _k ^\ast $, we have
  \begin{align*}
    \left\langle A \mathbf{u} - A \mathbf{u} ^\prime , \mathbf{u} -
      \mathbf{u} ^\prime \right\rangle _{ V \cap V ^\ast } &=
    \left\lVert \mathbf{u} - \mathbf{u} ^\prime \right\rVert _{ V \cap
      V ^\ast } ^2 + \left\langle \mathbf{K} \left( F \mathbf{u} - F
        \mathbf{u} ^\prime \right) , \mathbf{u} - \mathbf{u} ^\prime
    \right\rangle _{ V \cap V ^\ast }\\
    &= \left\lVert \mathbf{u} - \mathbf{u} ^\prime \right\rVert _{ V
      \cap V ^\ast } ^2 + \left\langle F \mathbf{u} - F \mathbf{u}
      ^\prime , \mathbf{u} - \mathbf{u} ^\prime \right\rangle \\
    &\geq \left\lVert \mathbf{u} - \mathbf{u} ^\prime \right\rVert ^2
    _{ V \cap V ^\ast } ,
  \end{align*} 
  where the last line follows from the monotonicity of $F$.
  Therefore, since $A$ is hemicontinuous and strongly monotone, the
  Browder--Minty theorem \citep{Browder1963,Minty1962} implies that it
  has a Lipschitz continuous inverse $ A ^{-1} $ with Lipschitz
  constant $ c ^{-1} = 1 $.  Hence, there exist unique solutions $
  \mathbf{u} = A ^{-1} \mathbf{K} f $ and $ \mathbf{u} ^\prime = A
  ^{-1} \mathbf{K} f ^\prime $.  Finally, by the fact that $ A ^{-1} $
  is nonexpansive, these solutions satisfy
  \begin{equation*}
    \left\lVert \mathbf{u} - \mathbf{u} ^\prime \right\rVert _{ V \cap
      V ^\ast } \leq \left\lVert \mathbf{K} f - \mathbf{K} f ^\prime
    \right\rVert _{ V \cap V ^\ast } \leq \left\lVert \mathbf{K}
    \right\rVert \left\lVert f - f ^\prime \right\rVert ,
  \end{equation*} 
  which completes the proof.
\end{proof}

\subsection{Solution estimate for the mixed formulation} Now that we
have established the well-posedness of the semilinear problem
\eqref{eqn:semilinearOperator}, we can use the \emph{linear} solution
theory, as developed by \citet{ArFaWi2010}, to develop a similar
estimate for the mixed formulation. This requires placing slightly
stronger conditions on the nonlinear operator $F$.  In particular, we
require $F$ to be Lipschitz continuous with respect to the $V$-norm:
that is, there exists a constant $C$ such that
\begin{equation*}
  \left\lVert F \mathbf{u} - F \mathbf{u} ^\prime \right\rVert  \leq C
  \left\lVert \mathbf{u} - \mathbf{u} ^\prime \right\rVert _V ,
\end{equation*} 
for all $ \mathbf{u} , \mathbf{u} ^\prime \in V ^k $.  (Later, in
\autoref{sec:localLipschitz}, we will see how this condition can be
relaxed in case $F$ is only locally Lipschitz.)

\begin{theorem}
  \label{thm:mixedWellPosed}
  If $F$ is monotone and Lipschitz continuous with respect to the
  $V$-norm, then the mixed semilinear problem
  \eqref{eqn:mixedSemilinear} has a unique solution $ \left( \sigma,
    u, p \right) $.  Moreover, the problem is well-posed: given two
  functionals $ f $ and $ f ^\prime $, the respective solutions $
  \left( \sigma, u, p \right) $ and $ \left( \sigma ^\prime , u
    ^\prime , p ^\prime \right) $ satisfy the Lipschitz continuity
  estimate
  \begin{equation*}
    \left\lVert \sigma - \sigma ^\prime \right\rVert _V + \left\lVert
      u - u ^\prime \right\rVert _V + \left\lVert p - p ^\prime
    \right\rVert \leq C \left\lVert f - f ^\prime \right\rVert ,
  \end{equation*} 
  where the constant $C$ depends only on the Poincar\'e constant $ c
  _P $ and on the Lipschitz constant of $F$.
\end{theorem}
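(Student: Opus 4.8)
The plan is to reduce the claim to two ingredients already in hand: the well-posedness of the operator equation \eqref{eqn:semilinearOperator} in the $ V \cap V ^\ast $-norm (\autoref{thm:wellPosed}), and the linear solution estimate of \citet{ArFaWi2010} for the mixed problem \eqref{eqn:mixedProblem}. First I would dispose of existence and uniqueness. Since $F$ is Lipschitz continuous with respect to the $V$-norm, the map $ t \mapsto F ( \mathbf{u} + t \mathbf{v} ) $ is continuous from $ [0,1] $ into $ W ^k $, so $F$ is in particular hemicontinuous; being also monotone, \autoref{thm:wellPosed} furnishes a unique $ \mathbf{u} \in V ^k \cap V _k ^\ast $ solving \eqref{eqn:semilinearOperator}. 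Setting $ p = P _{ \mathfrak{H} } \mathbf{u} $, $ u = \mathbf{u} - p $, and $ \sigma = \mathrm{d} ^\ast u $ then yields, via the equivalence of \eqref{eqn:semilinearOperator} and \eqref{eqn:mixedSemilinear}, a unique solution $ ( \sigma , u , p ) $ of the mixed problem.

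The key observation for the estimate is that the semilinear mixed problem \eqref{eqn:mixedSemilinear} is nothing but the \emph{linear} mixed problem \eqref{eqn:mixedProblem} with the modified data $ f - F ( u + p ) = f - F \mathbf{u} \in W ^k $ on the right-hand side. Consequently, given two functionals $ f , f ^\prime $ with solutions $ ( \sigma , u , p ) $ and $ ( \sigma ^\prime , u ^\prime , p ^\prime ) $, the difference $ ( \sigma - \sigma ^\prime , u - u ^\prime , p - p ^\prime ) $ solves the linear mixed problem with right-hand side $ ( f - F \mathbf{u} ) - ( f ^\prime - F \mathbf{u} ^\prime ) $, by linearity of the solution operator. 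Applying the linear estimate of \citet[Theorem 3.1]{ArFaWi2010}, whose constant $c$ depends only on $ c _P $, together with the triangle inequality and the $V$-Lipschitz bound on $F$, I obtain
\begin{equation*}
  \left\lVert \sigma - \sigma ^\prime \right\rVert _V + \left\lVert u - u ^\prime \right\rVert _V + \left\lVert p - p ^\prime \right\rVert \leq c \left\lVert f - f ^\prime \right\rVert + c \, C _F \left\lVert \mathbf{u} - \mathbf{u} ^\prime \right\rVert _V ,
\end{equation*}
where $ C _F $ denotes the Lipschitz constant of $F$.

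It remains to control the nonlinear term $ \left\lVert \mathbf{u} - \mathbf{u} ^\prime \right\rVert _V $, and this is where monotonicity must enter. The naive route---absorbing this term into the left-hand side---would demand a smallness condition of the form $ c \, C _F < 1 $, which we are not assuming; avoiding this is the main obstacle. Instead, I would invoke the a priori bound already established in \autoref{thm:wellPosed}, namely $ \left\lVert \mathbf{u} - \mathbf{u} ^\prime \right\rVert _{ V \cap V ^\ast } \leq \left\lVert \mathbf{K} \right\rVert \left\lVert f - f ^\prime \right\rVert $, which rests on monotonicity---through the nonexpansiveness of $ ( I + \mathbf{K} F ) ^{-1} $---and is entirely independent of $ C _F $. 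Since the $V$-norm is dominated by the $ V \cap V ^\ast $-norm, up to a constant depending only on $ c _P $ (the content of the equivalence noted after \eqref{eqn:unmixedProblem}), this yields $ \left\lVert \mathbf{u} - \mathbf{u} ^\prime \right\rVert _V \lesssim \left\lVert \mathbf{K} \right\rVert \left\lVert f - f ^\prime \right\rVert $. Substituting into the previous display and recalling that $ \left\lVert \mathbf{K} \right\rVert $ likewise depends only on $ c _P $, I arrive at the stated estimate with a constant $C$ of the form $ c \bigl( 1 + C _F \left\lVert \mathbf{K} \right\rVert \bigr) $ up to the $c_P$-dependent equivalence factor, hence depending only on $ c _P $ and the Lipschitz constant of $F$.
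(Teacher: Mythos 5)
Your proposal is correct and follows essentially the same route as the paper's own proof: reduce the semilinear problem to the linear mixed problem with data $g = f - F\mathbf{u}$, apply the linear well-posedness estimate of \citet[Theorem 3.1]{ArFaWi2010}, bound $\lVert F\mathbf{u} - F\mathbf{u}'\rVert$ by the $V$-Lipschitz property, and close the loop with the monotonicity-based estimate $\lVert \mathbf{u} - \mathbf{u}'\rVert_{V\cap V^\ast} \leq \lVert\mathbf{K}\rVert\,\lVert f - f'\rVert$ from \autoref{thm:wellPosed}, thereby avoiding any smallness condition on the Lipschitz constant. Your explicit remarks that $V$-Lipschitz continuity implies the hemicontinuity needed for existence, and that naive absorption would fail, are points the paper leaves implicit, but the argument is the same.
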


\begin{proof}
  If $\mathbf{u}$ is a solution of the semilinear problem $ \mathbf{L}
  \mathbf{u} + F \mathbf{u} = f $, then it is also a solution of the
  linear problem $ \mathbf{L} \mathbf{u} = g $, where $ g = f - F
  \mathbf{u} $.  Therefore, $ \left( \sigma, u , p \right) \in V ^{ k
    - 1 } \times V ^k \times \mathfrak{H} ^k $ is the unique solution
  of the mixed linear problem with functional $g$, and hence of the
  mixed semilinear problem \eqref{eqn:mixedSemilinear}.

  Now, suppose that $ \mathbf{u} ^\prime $ is the solution to $
  \mathbf{L} \mathbf{u} ^\prime + F \mathbf{u} ^\prime = f ^\prime $,
  and hence to the linear problem $ \mathbf{L} \mathbf{u} ^\prime = g
  ^\prime = f ^\prime - F \mathbf{u} ^\prime $.  Define $ \overline{
    \mathbf{u} } = \mathbf{u} - \mathbf{u} ^\prime $ and $ \overline{
    g } = g - g ^\prime $; subtracting the two linear equations $
  \mathbf{L} \mathbf{u} = g $ and $ \mathbf{L} \mathbf{u} ^\prime = g
  ^\prime $, it follows that $ \mathbf{L} \overline{ \mathbf{u} } =
  \overline{ g } $.  Therefore, $ \left( \overline{ \sigma } ,
    \overline{ u } , \overline{ p } \right) = \left( \sigma - \sigma
    ^\prime , u - u ^\prime , p - p ^\prime \right) $ satisfies the
  mixed linear problem with functional $ \overline{ g } $, so by the
  well-posedness of the mixed linear problem, we have
  \begin{equation*}
    \left\lVert \overline{ \sigma } \right\rVert _V + \left\lVert
      \overline{ u } \right\rVert _V + \left\lVert \overline{ p }
    \right\rVert \leq c \left\lVert \overline{ g } \right\rVert ,
  \end{equation*} 
  where $c$ depends only on the Poincar\'e constant $ c _P $.  Next,
  the right-hand side can be estimated by
  \begin{multline*}
    \left\lVert \overline{ g } \right\rVert \leq \left\lVert f - f
      ^\prime \right\rVert + \left\lVert F \mathbf{u} - F \mathbf{u}
      ^\prime \right\rVert \\
    \leq \left\lVert f - f ^\prime \right\rVert + C \left\lVert
      \mathbf{u} - \mathbf{u} ^\prime \right\rVert _V \leq \left\lVert
      f - f ^\prime \right\rVert + C \left\lVert \mathbf{u} -
      \mathbf{u} ^\prime \right\rVert _{ V \cap V ^\ast } ,
  \end{multline*}
  using the Lipschitz property of $F$.  Finally, applying the
  previously-obtained estimate $ \left\lVert \mathbf{u} - \mathbf{u}
    ^\prime \right\rVert _{ V \cap V ^\ast } \leq \left\lVert
    \mathbf{K} \right\rVert \left\lVert f - f ^\prime \right\rVert $,
  we get $ \left\lVert \overline{ g } \right\rVert \leq C \left\lVert
    f - f ^\prime \right\rVert $, so finally
  \begin{equation*}
    \left\lVert \sigma - \sigma ^\prime \right\rVert _V + \left\lVert
      u - u ^\prime \right\rVert _V + \left\lVert p - p ^\prime
    \right\rVert \leq C \left\lVert f - f ^\prime \right\rVert ,
  \end{equation*}
  which completes the proof.
\end{proof}

\begin{remark}
  Note that, in the linear case where $ F = 0 $, we can take $ f
  ^\prime = 0 $ so that $ \left( \sigma ^\prime , u ^\prime , p
    ^\prime \right) = 0 $.  Then, since $ g = f $ and $ g ^\prime = f
  ^\prime = 0 $, we simply recover the usual linear estimate $
  \left\lVert \sigma \right\rVert _V + \left\lVert u \right\rVert _V +
  \left\lVert p \right\rVert \leq c \left\lVert f \right\rVert $.
\end{remark}

\section{Approximation theory and numerical analysis}
\label{sec:approx}

\subsection{The discrete semilinear problem} To set up the discrete
semilinear problem, and develop the subsequent convergence results, we
begin by assuming the same conditions as in the linear case. Namely,
suppose that $ V _h \subset V $ is a Hilbert subcomplex, equipped with
a bounded cochain projection $ \pi _h \colon V \rightarrow V _h $.
Let $ K _h \colon W _h ^k \rightarrow W _h ^k $ be the discrete
solution operator for the linear problem, taking $ P _h f \mapsto u _h
$.  As with the continuous problem, we define a new solution operator
$ \mathbf{K} _h = K _h \oplus P _{ \mathfrak{H} _h } $ and consider
the discrete Hammerstein equation
\begin{equation*}
  \mathbf{u} _h + \mathbf{K} _h P _h F \mathbf{u} _h = \mathbf{K} _h P
  _h f .
\end{equation*}
Note that this is \emph{not} simply the Galerkin problem for the
original Hammerstein operator equation \eqref{eqn:hammerstein}, since
$ \mathbf{K} _h $ is not just a projection of $\mathbf{K}$ onto the
discrete space; in particular, we generally have $ \mathfrak{H} _h ^k
\not\subset \mathfrak{H} ^k $.

This is precisely the abstract Hammerstein equation on the discrete
Hilbert complex $ V _h $, in the sense of the previous section.
Therefore, there exists a unique solution $ \mathbf{u} _h $, and the
discrete solution operator $ P _h f \mapsto \mathbf{u} _h $, $ P _h f
^\prime \mapsto \mathbf{u} _h ^\prime $, satisfies the Lipschitz
condition
\begin{equation*}
  \left\lVert \mathbf{u} _h - \mathbf{u} _h ^\prime \right\rVert _{ V
    _h \cap V _h ^\ast } \leq
  \left\lVert \mathbf{K} _h \right\rVert \left\lVert P _h \left( f - f
      ^\prime \right) \right\rVert \leq \left\lVert \mathbf{K} _h
  \right\rVert \left\lVert f - f ^\prime \right\rVert .
\end{equation*} 
Equivalently, this gives a solution to the discrete mixed variational
problem: Find $ \left( \sigma _h , u _h , p _h \right) \in V _h ^{ k
  -1 } \times V _h ^k \times \mathfrak{H} _h ^k $ satisfying
\begin{equation}
  \label{eqn:mixedSubcomplexSemilinear}
  \begin{alignedat}{2}
    \left\langle \sigma _h , \tau \right\rangle - \left\langle u _h ,
      \mathrm{d} \tau \right\rangle &= 0, &\quad \forall \tau &\in V
    _h
    ^{ k - 1 } ,\\
    \left\langle \mathrm{d} \sigma _h , v \right\rangle + \left\langle
      \mathrm{d} u _h , \mathrm{d} v \right\rangle + \left\langle p _h
      , v \right\rangle + \left\langle F \left( u _h + p _h \right) ,
      v \right\rangle &= \left\langle f, v \right\rangle , &\quad
    \forall v &\in V _h  ^k ,\\
    \left\langle u _h , q \right\rangle &= 0 , &\quad \forall q &\in
    \mathfrak{H} _h ^k .
  \end{alignedat}
\end{equation}
If $F$ is Lipschitz, then we also obtain an estimate for the mixed
solution,
\begin{equation*}
  \left\lVert \sigma _h - \sigma _h ^\prime \right\rVert _V +
  \left\lVert u _h - u _h ^\prime \right\rVert _V + \left\lVert p _h -
    p _h ^\prime \right\rVert \leq C _h \left\lVert f - f ^\prime
  \right\rVert .
\end{equation*} 
Finally, we remark that when $ V _h $ is a family of subcomplexes
parametrized by $h$, and the projections $ \pi _h \colon V
\rightarrow V _h $ are bounded uniformly with respect to $h$, then the
constants in these estimates may also be bounded independently of $h$.

\subsection{Convergence of the discrete solution}
We now estimate the error in approximating the solution of the mixed
semilinear problem \eqref{eqn:mixedSemilinear} by that for the
discrete problem \eqref{eqn:mixedSubcomplexSemilinear}.  Despite the
introduction of nonlinearity, we obtain the same quasi-optimal
estimate as in \autoref{thm:subcomplex} for the linear problem.

\begin{theorem}
   \label{thm:errorEstimate}
   Let $ \left( V _h , \mathrm{d} \right) $ be a family of
   subcomplexes of the domain complex $ \left( V, \mathrm{d} \right) $
   of a closed Hilbert complex, parametrized by $h$ and admitting
   uniformly $V$-bounded cochain projections, and let $ \left( \sigma,
     u, p \right) \in V ^{ k - 1 } \times V ^k \times \mathfrak{H} ^k
   $ be the solution of \eqref{eqn:mixedSemilinear} and $ \left(
     \sigma _h , u _h , p _h \right) \in V _h ^{ k - 1 } \times V _h
   ^k \times \mathfrak{H} _h ^k $ the solution of problem
   \eqref{eqn:mixedSubcomplexSemilinear}.  Then, assuming the operator
   $F$ is Lipschitz with respect to the $V$-norm, we have the estimate
  \begin{multline*}
    \left\lVert \sigma - \sigma _h \right\rVert _V + \left\lVert u - u
      _h \right\rVert _V + \left\lVert p - p _h \right\rVert \\
    \leq C \bigl( \inf _{ \tau \in V _h ^{ k - 1 } } \left\lVert
      \sigma - \tau \right\rVert _V + \inf _{ v \in V _h ^k }
    \left\lVert u - v \right\rVert _V + \inf _{ q \in V _h ^k }
    \left\lVert p - q \right\rVert _V + \mu \inf _{ v \in V _h ^k }
    \left\lVert P _{ \mathfrak{B} } u - v \right\rVert _V \bigr) ,
  \end{multline*}
  where $\mu$ is defined as in \autoref{thm:subcomplex}, and where the
  constant $C$ depends only on the Poincar\'e constant $ c _P $ and
  the Lipschitz constant of $F$.
\end{theorem}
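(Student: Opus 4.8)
The plan is to reduce this semilinear estimate to the \emph{linear} subcomplex estimate of \citet{ArFaWi2010} (\autoref{thm:subcomplex}) by ``freezing'' the nonlinearity, while using the monotonicity of $F$---through the nonexpansiveness of the Hammerstein solution operators from \autoref{thm:wellPosed}---to dispense with any smallness hypothesis on the Lipschitz constant of $F$. Write $ \mathbf{u} = u + p $ and $ \mathbf{u} _h = u _h + p _h $, and let $ C _F $ denote the $V$-Lipschitz constant of $F$. Moving the nonlinear term to the right-hand side of the second equation shows that $ \left( \sigma, u, p \right) $ solving \eqref{eqn:mixedSemilinear} is exactly the solution of the linear mixed problem \eqref{eqn:mixedProblem} with data $ g = f - F \mathbf{u} $, and likewise $ \left( \sigma _h , u _h , p _h \right) $ solving \eqref{eqn:mixedSubcomplexSemilinear} is the discrete linear solution \eqref{eqn:subcomplexProblem} with data $ g _h = f - F \mathbf{u} _h $. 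I would then introduce the auxiliary triple $ \left( \tilde{\sigma} _h , \tilde{u} _h , \tilde{p} _h \right) $, namely the discrete linear solution with the \emph{continuous} frozen data $g$; equivalently $ \hat{\mathbf{u}} _h := \tilde{u} _h + \tilde{p} _h = \mathbf{K} _h P _h g $.

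Next I would split the error through this auxiliary triple. For the first piece, $ \left( \sigma, u, p \right) $ and $ \left( \tilde{\sigma} _h , \tilde{u} _h , \tilde{p} _h \right) $ are the continuous and discrete linear solutions for the \emph{same} data $g$, so \autoref{thm:subcomplex} applies verbatim and bounds $ \lVert \sigma - \tilde{\sigma} _h \rVert _V + \lVert u - \tilde{u} _h \rVert _V + \lVert p - \tilde{p} _h \rVert $ by precisely the best-approximation terms in the statement (the continuous linear solution with data $g$ being $ \left( \sigma, u, p \right) $ itself). For the second piece, $ \left( \tilde{\sigma} _h , \tilde{u} _h , \tilde{p} _h \right) $ and $ \left( \sigma _h , u _h , p _h \right) $ are both discrete linear solutions, now with data $g$ and $ g _h $, so the discrete counterpart of the linear well-posedness estimate (uniform in $h$ under the uniform-boundedness hypothesis) gives
\[
  \lVert \tilde{\sigma} _h - \sigma _h \rVert _V + \lVert \tilde{u} _h - u _h \rVert _V + \lVert \tilde{p} _h - p _h \rVert \leq c \lVert g - g _h \rVert = c \lVert F \mathbf{u} - F \mathbf{u} _h \rVert \leq c \, C _F \lVert \mathbf{u} - \mathbf{u} _h \rVert _V .
\]

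It remains to bound $ \lVert \mathbf{u} - \mathbf{u} _h \rVert _V $ without circularity, and this is the crux: a direct absorption would force $ C _F $ to be small, so instead I would exploit monotonicity. Setting $ A _h = I + \mathbf{K} _h P _h F $, the discrete Hammerstein equation is $ A _h \mathbf{u} _h = \mathbf{K} _h P _h f $, while $ \hat{\mathbf{u}} _h = \mathbf{K} _h P _h g $ gives $ A _h \hat{\mathbf{u}} _h = \mathbf{K} _h P _h f - \mathbf{K} _h P _h \left( F \mathbf{u} - F \hat{\mathbf{u}} _h \right) $; subtracting yields $ A _h \mathbf{u} _h - A _h \hat{\mathbf{u}} _h = \mathbf{K} _h P _h \left( F \mathbf{u} - F \hat{\mathbf{u}} _h \right) $. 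Since $F$ is monotone, $ A _h ^{-1} $ is nonexpansive in the $ V _h \cap V _h ^\ast $ norm, exactly as in \autoref{thm:wellPosed} applied to the discrete complex, so
\[
  \lVert \mathbf{u} _h - \hat{\mathbf{u}} _h \rVert _{ V _h \cap V _h ^\ast } \leq \lVert \mathbf{K} _h P _h \left( F \mathbf{u} - F \hat{\mathbf{u}} _h \right) \rVert \leq \lVert \mathbf{K} _h \rVert \, C _F \, \lVert \mathbf{u} - \hat{\mathbf{u}} _h \rVert _V ,
\]
with $ \lVert \mathbf{K} _h \rVert $ bounded uniformly in $h$. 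Crucially, the right-hand side involves only the \emph{linear} error $ \lVert \mathbf{u} - \hat{\mathbf{u}} _h \rVert _V \leq \lVert u - \tilde{u} _h \rVert _V + \lVert p - \tilde{p} _h \rVert $, already controlled by the first piece, so no unknown quantity reappears. Converting from the $ V _h \cap V _h ^\ast $ norm back to the $V$-norm (these being equivalent uniformly in $h$) and using $ \lVert \mathbf{u} - \mathbf{u} _h \rVert _V \leq \lVert \mathbf{u} - \hat{\mathbf{u}} _h \rVert _V + \lVert \hat{\mathbf{u}} _h - \mathbf{u} _h \rVert _V $, I obtain $ \lVert \mathbf{u} - \mathbf{u} _h \rVert _V \leq C \lVert \mathbf{u} - \hat{\mathbf{u}} _h \rVert _V $.

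Combining the two pieces with this bound yields the claimed estimate, with a constant depending only on $ c _P $ and $ C _F $. The main obstacle is exactly this last step: one must arrange the comparison so that monotonicity, via the nonexpansiveness of $ A _h ^{-1} $, controls the nonlinear contribution by the \emph{already-estimated} linear error rather than by the full (unknown) error---this is what removes any small-data or small-Lipschitz-constant restriction, and it is the reason the operator-theoretic reformulation of \autoref{sec:semilinear} was introduced.
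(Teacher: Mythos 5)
Your proof is correct, and its skeleton coincides with the paper's: both arguments freeze the nonlinearity at the continuous solution, introduce the auxiliary discrete \emph{linear} solution for the data $g = f - F(u+p)$, control the continuous-versus-auxiliary error by \autoref{thm:subcomplex}, and finish with the triangle inequality. Where you genuinely diverge is in handling the discrete-versus-auxiliary difference. The paper observes that the auxiliary triple $(\sigma_h', u_h', p_h')$ also solves the discrete \emph{semilinear} problem with shifted data $f' = f - F(u+p) + F(u_h'+p_h')$ (one simply adds $F(u_h'+p_h')$ to both sides), and then invokes the Lipschitz continuity of the discrete semilinear solution operator (the discrete version of \autoref{thm:mixedWellPosed}, inside which the Browder--Minty/monotonicity machinery is already packaged); the resulting data difference $\lVert f - f' \rVert = \lVert F(u+p) - F(u_h'+p_h') \rVert$ is bounded, via the Lipschitz property of $F$, by the \emph{already-estimated} linear error, so no circularity ever appears. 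You instead compare the two triples as discrete \emph{linear} solutions with data $g$ and $g_h = f - F\mathbf{u}_h$, which puts the unknown total error $\lVert \mathbf{u} - \mathbf{u}_h \rVert_V$ on the right-hand side, and you then break that circularity by running the discrete Hammerstein argument explicitly---nonexpansiveness of $(I + \mathbf{K}_h P_h F)^{-1}$ applied to the pair $\mathbf{u}_h$, $\hat{\mathbf{u}}_h$, which is sound since $A_h \mathbf{u}_h - A_h \hat{\mathbf{u}}_h = \mathbf{K}_h P_h ( F\mathbf{u} - F\hat{\mathbf{u}}_h )$ involves only the known linear error. Both routes rest on the same two ingredients (discrete linear well-posedness and nonexpansiveness of the discrete Hammerstein inverse, each uniform in $h$ thanks to the uniformly bounded cochain projections), so in substance they are dual arrangements of one idea: the paper shifts the data on the semilinear side and keeps the monotone-operator theory encapsulated in the well-posedness theorem, which makes its proof shorter; your arrangement is longer and needs the additional (one-sided, uniformly-in-$h$) comparison of the $V_h \cap V_h^\ast$ norm with the $V$-norm, but it exposes exactly where monotonicity enters and why no smallness of the Lipschitz constant of $F$ is required---a feature the paper's proof shares, though only implicitly.
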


\begin{proof}
  Recall that, since $ \left( \sigma, u, p \right) $ solves the
  semilinear problem for the functional $f$, it also solves the linear
  problem for the functional $ g = f - F \left( u + p \right) $.  Let
  $ \left( \sigma _h ^\prime , u _h ^\prime , p _h ^\prime \right) \in
  V _h ^{ k -1 } \times V _h ^k \times \mathfrak{H} _h ^k $ be the
  solution to the corresponding discrete linear problem for $g$.  By
  \autoref{thm:subcomplex}, this satisfies the error estimate
  \begin{multline*}
    \left\lVert \sigma - \sigma _h ^\prime \right\rVert _V +
    \left\lVert u - u _h ^\prime \right\rVert _V + \left\lVert p - p
      _h ^\prime \right\rVert \\
    \leq C \bigl( \inf _{ \tau \in V _h ^{ k - 1 } } \left\lVert
      \sigma - \tau \right\rVert _V + \inf _{ v \in V _h ^k }
    \left\lVert u - v \right\rVert _V + \inf _{ q \in V _h ^k }
    \left\lVert p - q \right\rVert _V + \mu \inf _{ v \in V _h ^k }
    \left\lVert P _{ \mathfrak{B} } u - v \right\rVert _V \bigr) .
  \end{multline*}
  Next, observe that $ \left( \sigma _h ^\prime , u _h ^\prime , p _h
    ^\prime \right) $ is also a solution of the discrete semilinear
  problem with functional $ f ^\prime = f - F \left( u + p \right) + F
  \left( u _h ^\prime + p _h ^\prime \right) $, since we can just add
  $ F \left( u _h ^\prime + p _h ^\prime \right) $ to both sides of
  the equation.  However, since the discrete solution operator is
  Lipschitz, we have
  \begin{align*}
    \left\lVert \sigma _h - \sigma _h ^\prime \right\rVert _V +
    \left\lVert u _h - u _h ^\prime \right\rVert _V + \left\lVert p _h
      - p _h ^\prime \right\rVert &\leq C \left\lVert f - f ^\prime
    \right\rVert \\
    &= C \left\lVert F \left( u + p \right) - F \left( u _h ^\prime +
        p _h ^\prime \right) \right\rVert .
  \end{align*}  
  Furthermore, since $F$ is also Lipschitz,
  \begin{equation*}
    \left\lVert F \left( u + p \right) - F \left( u _h ^\prime +
        p _h ^\prime \right) \right\rVert \leq   C \left( \left\lVert u - u _h
        ^\prime \right\rVert _V + \left\lVert p - p _h ^\prime
      \right\rVert \right) ,
  \end{equation*} 
  which implies
  \begin{equation*}
    \left\lVert \sigma _h - \sigma _h ^\prime \right\rVert _V +
    \left\lVert u _h - u _h ^\prime \right\rVert _V + \left\lVert p _h
      - p _h ^\prime \right\rVert \leq  C \left(     \left\lVert
        \sigma - \sigma _h ^\prime \right\rVert _V +  \left\lVert u -
        u _h ^\prime \right\rVert _V + \left\lVert p - p  _h ^\prime
      \right\rVert \right) .
  \end{equation*} 
  An application of the triangle inequality completes the proof.
\end{proof}

As in the linear case, this implies that if $ V _h $ is pointwise
approximating in $V$ as $ h \rightarrow 0 $, then $ \left( \sigma _h ,
  u _h , p _h \right) \rightarrow \left( \sigma, u, p \right) $.
Moreover, the rate of convergence for this semilinear problem is the
same as that for the linear problem.

\subsection{Improved estimates} We now establish improved estimates
for the semilinear problem, subject to the compactness property
introduced in \autoref{sec:AFWimproved}.

\begin{theorem}
  Let $ \left( V, \mathrm{d} \right) $ be the domain complex of a
  closed Hilbert complex $ \left( W, \mathrm{d} \right) $ satisfying
  the compactness property, and let $ \left( V _h , \mathrm{d} \right)
  $ be a family of subcomplexes parametrized by $h$ and admitting
  uniformly $W$-bounded cochain projections.  Let $ \left( \sigma, u,
    p \right) \in V ^{ k - 1 } \times V ^k \times \mathfrak{H} ^k $ be
  the solution of \eqref{eqn:mixedSemilinear} and $ \left( \sigma _h ,
    u _h , p _h \right) \in V _h ^{ k - 1 } \times V _h ^k \times
  \mathfrak{H} _h ^k $ the solution of problem
  \eqref{eqn:mixedSubcomplexSemilinear}, and assume that the operator
  $F$ is Lipschitz.  Then for some constant $C$ independent of $h$ and
  $ \left( \sigma, u, p \right) $, we have
\begin{align*}
  \left\lVert \mathrm{d} \left( \sigma - \sigma _h \right)
  \right\rVert &\leq C \bigl[ E \left( \mathrm{d} \sigma \right) + E
  (u) + E \left( \mathrm{d} u \right) + E (p) \\
  &\qquad + \eta E (\sigma) +
  \mu E \left( P _{ \mathfrak{B}  } u \right) \bigr] \\
  \left\lVert \sigma - \sigma _h \right\rVert &\leq C \bigl[ E
  (\sigma) + E (u) + E \left( \mathrm{d} u \right) + E (p) \\
  &\qquad + \left( \eta + \delta + \mu \right) E \left( \mathrm{d}
    \sigma \right) + \mu E \left( P _{ \mathfrak{B} } u \right) \bigr]
  \\
  \left\lVert u - u _h \right\rVert _V + \left\lVert p - p _h
  \right\rVert &\leq C \bigl( E (u) + E \left( \mathrm{d} u \right) +
  E (p) \\
  &\qquad + \eta \left[ E (\sigma) + E \left( \mathrm{d} \sigma
    \right) \right] + \left( \delta + \mu \right) E \left( \mathrm{d}
    \sigma \right) + \mu E \left( P _{ \mathfrak{B} } u \right) \bigr) .
\end{align*}
\end{theorem}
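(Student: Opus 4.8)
The plan is to mimic the proof strategy of \autoref{thm:errorEstimate}, reducing the semilinear problem to an associated linear problem and then invoking the linear improved estimates (\autoref{thm:improved}) directly. Since $(\sigma, u, p)$ solves the mixed semilinear problem \eqref{eqn:mixedSemilinear} for the functional $f$, it also solves the mixed \emph{linear} problem for the modified functional $g = f - F(u+p)$. Likewise, I would introduce $(\sigma_h', u_h', p_h')$ as the solution of the corresponding discrete \emph{linear} problem with the same functional $g$, so that \autoref{thm:improved} applies verbatim to the pair $(\sigma, u, p)$ and $(\sigma_h', u_h', p_h')$, giving the full suite of $W$-norm estimates with the coefficients $\delta$, $\eta$, $\mu$ and the best-approximation quantities $E(\cdot)$.

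Next I would control the discrepancy between the genuine discrete semilinear solution $(\sigma_h, u_h, p_h)$ and the auxiliary discrete linear solution $(\sigma_h', u_h', p_h')$. As in the proof of \autoref{thm:errorEstimate}, the key observation is that $(\sigma_h', u_h', p_h')$ solves the discrete semilinear problem for a perturbed functional $f' = f - F(u+p) + F(u_h' + p_h')$, so the Lipschitz stability of the discrete semilinear solution operator bounds the difference by $C\lVert F(u+p) - F(u_h'+p_h')\rVert$, and the Lipschitz property of $F$ bounds this in turn by $C(\lVert u - u_h'\rVert_V + \lVert p - p_h'\rVert)$. The subtlety here, relative to the earlier theorem, is that I need this difference measured in the \emph{$W$}-norm rather than the $V$-norm, since the target estimates are $W$-norm bounds. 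I would therefore want the discrete solution operator to be Lipschitz in a form compatible with the $W$-norm output; the cleanest route is to bound $\lVert \sigma_h - \sigma_h'\rVert$, $\lVert u_h - u_h'\rVert$, and $\lVert p_h - p_h'\rVert$ (and the relevant $\mathrm{d}$-norms) by the $V$-norm data error $\lVert u - u_h'\rVert_V + \lVert p - p_h'\rVert$, which is already controlled by the $V$-norm estimate of \autoref{thm:errorEstimate}.

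The final step is assembly by the triangle inequality: each target $W$-norm quantity is split as $\lVert \,\cdot\, - (\cdot)_h\rVert \leq \lVert \,\cdot\, - (\cdot)_h'\rVert + \lVert (\cdot)_h' - (\cdot)_h\rVert$, where the first summand is handled by \autoref{thm:improved} and the second by the Lipschitz-perturbation bound just described, which reduces to the $V$-norm error already estimated in \autoref{thm:errorEstimate}. Feeding the quasi-optimal $V$-norm bound into this perturbation term accounts for the extra $E(u)$, $E(\mathrm{d}u)$, $E(p)$, and $\mu E(P_{\mathfrak{B}}u)$ contributions that appear in the stated right-hand sides but are absent from the purely linear \autoref{thm:improved}; these are precisely the price of the nonlinearity.

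The main obstacle I anticipate is bookkeeping rather than conceptual: I must verify that the Lipschitz-perturbation term, which naturally arises as a $V$-norm quantity through $\lVert u - u_h'\rVert_V + \lVert p - p_h'\rVert$, can be legitimately inserted into each $W$-norm line and absorbed into the $E(\cdot)$ terms without introducing any factor that fails to vanish as $h \to 0$. Concretely, I need to ensure that the constant $C$ stays independent of $h$ (using the uniform $W$-boundedness of the projections and the uniform Lipschitz stability of the discrete solution operators), and that the $V$-norm error bound from \autoref{thm:errorEstimate}—which itself mixes $E(\cdot)$ and $\mu E(P_{\mathfrak{B}}u)$ terms—combines cleanly with the coefficients $\eta$, $\delta$, $\mu$ appearing in \autoref{thm:improved} to reproduce exactly the asymmetric right-hand sides displayed in the statement. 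Checking that the coefficients attach to the correct $E(\cdot)$ terms in each of the three inequalities is the delicate part.
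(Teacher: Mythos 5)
Your skeleton is the same as the paper's: introduce the auxiliary discrete \emph{linear} solution $(\sigma_h', u_h', p_h')$ for the functional $g = f - F(u+p)$, apply \autoref{thm:improved} to the pair $(\sigma, u, p)$, $(\sigma_h', u_h', p_h')$, observe that $(\sigma_h', u_h', p_h')$ also solves the discrete semilinear problem with perturbed functional $f' = f - F(u+p) + F(u_h'+p_h')$, use Lipschitz stability of the discrete solution operator plus the Lipschitz property of $F$ to bound the discrepancy by $C\bigl( \lVert u - u_h' \rVert_V + \lVert p - p_h' \rVert \bigr)$, and finish by the triangle inequality. The gap is in how you close the loop on this perturbation term. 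You propose to bound $\lVert u - u_h' \rVert_V + \lVert p - p_h' \rVert$ by the quasi-optimal $V$-norm estimate of \autoref{thm:errorEstimate} (equivalently \autoref{thm:subcomplex}), and you assert that this ``accounts for the extra $E(u)$, $E(\mathrm{d}u)$, $E(p)$, and $\mu E(P_{\mathfrak{B}}u)$ contributions.'' But that estimate bounds the error by $V$-norm best-approximation infima such as $\inf_{v \in V_h^k} \lVert u - v \rVert_V$ and $\inf_{\tau} \lVert \sigma - \tau \rVert_V$, \emph{not} by the $W$-norm quantities $E(\cdot)$; you have mischaracterized its conclusion. There is no general inequality $\inf_{v \in V_h^k} \lVert u - v \rVert_V \leq C\bigl( E(u) + E(\mathrm{d}u) \bigr)$: the $V$-norm infimum requires a single $v$ whose $W$-distance to $u$ \emph{and} whose differential's $W$-distance to $\mathrm{d}u$ are both small, while $E(u)$ and $E(\mathrm{d}u)$ use unrelated approximants. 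In typical de~Rham applications the $V$-norm infimum is one power of $h$ worse than $E(u)$; converting $V$-norm data into $W$-norm data is precisely the nontrivial content of \autoref{thm:improved}, so your route delivers a strictly weaker bound than the theorem claims.

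The fix --- and what the paper actually does --- is to feed the perturbation term back into the improved estimates you have already invoked for the auxiliary pair: write $\lVert u - u_h' \rVert_V + \lVert p - p_h' \rVert \leq \lVert u - u_h' \rVert + \lVert \mathrm{d}(u - u_h') \rVert + \lVert p - p_h' \rVert$ and bound each of the three terms by the corresponding line of \autoref{thm:improved}. After discarding higher-order terms (e.g.\ $\eta E(\mathrm{d}u)$ against $E(\mathrm{d}u)$, $\eta^2 E(\mathrm{d}\sigma)$ against $\eta E(\mathrm{d}\sigma)$), this yields exactly $C\bigl( E(u) + E(\mathrm{d}u) + E(p) + \eta\left[ E(\sigma) + E(\mathrm{d}\sigma) \right] + (\delta + \mu) E(\mathrm{d}\sigma) + \mu E(P_{\mathfrak{B}}u) \bigr)$, which is the nonlinear correction appearing in all three stated inequalities; since the $V_h$-norm of the discrepancy dominates $\lVert \mathrm{d}(\sigma_h - \sigma_h') \rVert$, $\lVert \sigma_h - \sigma_h' \rVert$, and $\lVert u_h - u_h' \rVert_V + \lVert p_h - p_h' \rVert$, the triangle inequality then completes the proof. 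Your $h$-uniformity concerns are handled as you say, via the uniformly $W$-bounded projections.
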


\begin{proof}
  As before, let $ \left( \sigma _h ^\prime , u _h ^\prime , p _h
    ^\prime \right) \in V _h ^{ k - 1 } \times V _h ^k \times
  \mathfrak{H} _h ^k $ be the solution to the discrete linear problem
  with right-hand side functional $ g = f - F \left( u + p \right) $.
  Then \autoref{thm:improved} gives the improved estimates
  \begin{align*}
    \left\lVert \mathrm{d} \left( \sigma - \sigma _h ^\prime \right)
    \right\rVert &\leq C E \left( \mathrm{d} \sigma \right), \\
    \left\lVert \sigma - \sigma _h ^\prime \right\rVert &\leq C \left[ E
      (\sigma) + \eta E \left( \mathrm{d} \sigma \right) \right], \\
    \left\lVert p - p _h ^\prime \right\rVert &\leq C \left[ E (p) + \mu
      E \left( \mathrm{d} \sigma \right) \right], \\
    \left\lVert \mathrm{d} \left( u - u _h ^\prime \right) \right\rVert
    &\leq C \left( E \left( \mathrm{d} u \right) + \eta \left[ E \left(
          \mathrm{d} \sigma \right) + E (p) \right] \right) ,\\
    \left\lVert u - u _h ^\prime \right\rVert &\leq C \bigl( E (u) +
    \eta \left[ E \left( \mathrm{d} u \right) + E (\sigma) \right] \\
    &\qquad + \left( \eta ^2 + \delta \right) \left[ E \left( \mathrm{d}
        \sigma \right) + E (p) \right] + \mu E \left( P _{ \mathfrak{B}
      } u \right) \bigr) .
  \end{align*}
  However, in the proof of \autoref{thm:errorEstimate}, we saw that
  each of the terms $ \left\lVert \mathrm{d} \left( \sigma _h - \sigma
      _h ^\prime \right) \right\rVert $, $ \left\lVert \sigma _h -
    \sigma _h ^\prime \right\rVert $, and $ \left\lVert u _h - u _h
    ^\prime \right\rVert _V + \left\lVert p _h - p _h ^\prime
  \right\rVert $ is controlled by
  \begin{multline*}
    \left\lVert \sigma _h - \sigma _h ^\prime \right\rVert _V +
    \left\lVert u _h - u _h ^\prime \right\rVert _V + \left\lVert p _h
      - p _h ^\prime \right\rVert\\
    \begin{aligned}
      &\leq C \left( \left\lVert u - u _h ^\prime \right\rVert _V +
        \left\lVert p - p _h ^\prime \right\rVert
      \right) \\
      &\leq C \bigl( E (u) + E \left( \mathrm{d} u \right) + E (p) \\
      &\qquad + \eta \left[ E (\sigma) + E \left( \mathrm{d} \sigma
        \right) \right] + \left( \delta + \mu \right) E \left(
        \mathrm{d} \sigma \right) + \mu E \left( P _{ \mathfrak{B} } u
      \right) \bigr).
    \end{aligned}
  \end{multline*}
  Applying the triangle inequality and eliminating higher-order terms,
  the result follows immediately.
\end{proof}

\subsection{Semilinear variational crimes}

As first discussed in \autoref{sec:linearCrimes}, suppose now that $ V
_h $ is not necessarily a subcomplex of $V$, and let $ i _h \colon V
_h \hookrightarrow V $ and $ \pi _h \colon V \rightarrow V _h $ be the
$W$-bounded inclusion and $V$-bounded projection morphisms,
respectively, satisfying $ \pi _h \circ i _h = \operatorname{id} _{ V
  _h } $.  Given a discrete functional $ f _h \in W _h ^k $ and a
discrete nonlinear operator $ F _h \colon V _h ^k \rightarrow W _h ^k
$, we wish to approximate the continuous variational problem
\eqref{eqn:mixedSemilinear} by the discrete problem: Find $ \left(
  \sigma _h , u _h , p _h \right) \in V _h ^{ k -1 } \times V _h ^k
\times \mathfrak{H} _h ^k $ satisfying
\begin{equation}
  \label{eqn:discreteSemilinear}
  \begin{alignedat}{2}
    \left\langle \sigma _h , \tau _h \right\rangle _h - \left\langle u
      _h , \mathrm{d} _h \tau _h \right\rangle _h &= 0, &\quad \forall
    \tau _h &\in V _h ^{ k - 1 } ,\\
    \left\langle \mathrm{d} _h \sigma _h , v _h \right\rangle _h +
    \left\langle \mathrm{d} _h u _h , \mathrm{d} _h v _h
    \right\rangle _h + \left\langle p _h , v _h \right\rangle _h \quad
    \\
    + \left\langle F _h \left( u _h + p _h \right) , v _h
    \right\rangle _h &= \left\langle f _h , v _h \right\rangle _h ,
    &\quad \forall v _h
    &\in V _h ^k , \\
    \left\langle u _h , q _h \right\rangle _h &= 0 , &\quad \forall q
    _h &\in \mathfrak{H} _h ^k .
  \end{alignedat}
\end{equation} 
For the following error estimate, we define the projection map $ P _{
  V _h } \colon V \rightarrow V _h $ so that $ i _h P _{ V _h } v $ is
the $V$-orthogonal projection of $ v $ onto the subcomplex $ i _h V _h
\subset V $.

\begin{theorem}
  Let $ \left( \sigma , u , p \right) \in V ^{ k - 1 } \times V ^k
  \times \mathfrak{H} ^k $ be the solution to
  \eqref{eqn:mixedSemilinear} and $ \left( \sigma _h , u _h , p _h
  \right) \in V _h ^{ k - 1 } \times V _h ^k \times \mathfrak{H} _h ^k
  $ be the solution to \eqref{eqn:discreteSemilinear}.  If $ F _h $ is
  Lipschitz, and its constant is uniformly bounded in $h$, then
    \begin{multline*}
      \left\lVert \sigma - i _h \sigma _h \right\rVert _V +
      \left\lVert u - i _h u _h \right\rVert _V + \left\lVert
        p - i _h p _h  \right\rVert \\
      \leq C \bigl( \inf _{ \tau \in i _h V _h ^{ k - 1 } }
      \left\lVert \sigma - \tau \right\rVert _V + \inf _{ v \in i _h V
        _h ^k } \left\lVert u - v \right\rVert _V + \inf _{ q \in i _h
        V _h ^k } \left\lVert p - q \right\rVert _V + \mu \inf _{ v
        \in i _h V _h
        ^k } \left\lVert P _{ \mathfrak{B} } u - v \right\rVert _V \\
      + \left\lVert i _h ^\ast \left( f - F \left( u + p \right)
        \right) - \left( f _h - F _h P _{V_h} \left( u + p \right)
        \right) \right\rVert _h + \left\lVert I - J _h \right\rVert
      \left\lVert f - F \left( u + p \right) \right\rVert \bigr) ,
  \end{multline*}
  where $\mu$ is defined as in \autoref{thm:subcomplex}.
\end{theorem}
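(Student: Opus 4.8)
The plan is to mirror the reduction used in the proof of \autoref{thm:errorEstimate}, but now feeding the \emph{linear} variational-crimes estimate of \autoref{cor:HoSt2010-3.10} into the argument in place of \autoref{thm:subcomplex}. The essential trick, exactly as in the subcomplex case, is to arrange that the nonlinear perturbation is always evaluated at an \emph{auxiliary} discrete solution whose error is already controlled by the linear theory, rather than at the unknown $ \left( \sigma _h, u _h, p _h \right) $; otherwise the resulting bound would be circular and could not be absorbed.

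First I would set $ g = f - F \left( u + p \right) $, so that the continuous triple $ \left( \sigma, u, p \right) $ solves the \emph{linear} mixed problem \eqref{eqn:mixedProblem} with data $g$. On the discrete side I introduce the auxiliary triple $ \left( \sigma _h ^\prime, u _h ^\prime, p _h ^\prime \right) \in V _h ^{ k - 1 } \times V _h ^k \times \mathfrak{H} _h ^k $ defined as the solution of the linear variational-crimes problem \eqref{eqn:discreteProblem} with discrete data $ g _h = f _h - F _h P _{ V _h } \left( u + p \right) $. Applying \autoref{cor:HoSt2010-3.10} to this linear pair then bounds $ \left\lVert \sigma - i _h \sigma _h ^\prime \right\rVert _V + \left\lVert u - i _h u _h ^\prime \right\rVert _V + \left\lVert p - i _h p _h ^\prime \right\rVert $ by the best-approximation terms together with $ \left\lVert g _h - i _h ^\ast g \right\rVert _h + \left\lVert I - J _h \right\rVert \left\lVert g \right\rVert $; by the definitions of $g$ and $ g _h $ these are precisely the two crime terms appearing in the statement.

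Next I would relate the true discrete solution to this auxiliary one. Since $ \left( \sigma _h ^\prime, u _h ^\prime, p _h ^\prime \right) $ solves the discrete \emph{linear} problem for $ g _h $, adding $ F _h \left( u _h ^\prime + p _h ^\prime \right) $ to the right-hand side shows it solves the discrete \emph{semilinear} problem \eqref{eqn:discreteSemilinear} for the perturbed data $ f _h ^\prime = f _h - F _h P _{ V _h } \left( u + p \right) + F _h \left( u _h ^\prime + p _h ^\prime \right) $. Invoking the uniform Lipschitz continuity of the discrete semilinear solution operator---which holds because $ F _h $ has a uniformly bounded Lipschitz constant and the cochain projections are uniformly bounded---gives $ \left\lVert \sigma _h - \sigma _h ^\prime \right\rVert _{ V _h } + \left\lVert u _h - u _h ^\prime \right\rVert _{ V _h } + \left\lVert p _h - p _h ^\prime \right\rVert _h \leq C \left\lVert f _h - f _h ^\prime \right\rVert _h = C \left\lVert F _h P _{ V _h } \left( u + p \right) - F _h \left( u _h ^\prime + p _h ^\prime \right) \right\rVert _h $, and the Lipschitz property of $ F _h $ bounds the last quantity by $ C \left\lVert P _{ V _h } \left( u + p \right) - \left( u _h ^\prime + p _h ^\prime \right) \right\rVert _{ V _h } $.

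The key step is then to show that this quantity is itself controlled by the linear estimate already obtained. I would first record the norm equivalence between $ \left\lVert v _h \right\rVert _{ V _h } $ and $ \left\lVert i _h v _h \right\rVert _V $ on $ V _h ^k $: the bound $ \left\lVert i _h v _h \right\rVert _V \leq C \left\lVert v _h \right\rVert _{ V _h } $ follows from $ i _h $ being a $W$-bounded morphism (so $ \mathrm{d} i _h = i _h \mathrm{d} _h $), while the reverse follows from $ v _h = \pi _h i _h v _h $ with $ \pi _h $ being $V$-bounded. Since by construction $ i _h P _{ V _h } \left( u + p \right) $ is the $V$-orthogonal projection of $ u + p $ onto $ i _h V _h $, a triangle inequality gives $ \left\lVert i _h P _{ V _h } \left( u + p \right) - i _h \left( u _h ^\prime + p _h ^\prime \right) \right\rVert _V \leq \inf _{ w \in i _h V _h } \left\lVert \left( u + p \right) - w \right\rVert _V + \left\lVert \left( u + p \right) - i _h \left( u _h ^\prime + p _h ^\prime \right) \right\rVert _V $; the infimum splits into the $u$- and $p$-best-approximation terms of the statement, and the remaining term is $ \leq \left\lVert u - i _h u _h ^\prime \right\rVert _V + \left\lVert p - i _h p _h ^\prime \right\rVert $, where I would use that $ i _h p _h ^\prime $ is a cocycle (because $ p _h ^\prime \in \mathfrak{H} _h ^k \subset \ker \mathrm{d} _h $ and $ i _h $ is a morphism), so that the $V$- and $W$-norms agree on $ p - i _h p _h ^\prime $. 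Both pieces are already bounded by the linear estimate from the second paragraph. Finally, a triangle inequality combined once more with the norm equivalence assembles $ \left\lVert \sigma - i _h \sigma _h \right\rVert _V + \left\lVert u - i _h u _h \right\rVert _V + \left\lVert p - i _h p _h \right\rVert $ from the auxiliary-error bound and the $ \left( \sigma _h - \sigma _h ^\prime, u _h - u _h ^\prime, p _h - p _h ^\prime \right) $ bound, yielding the claim. The main obstacle is precisely the bookkeeping in this last paragraph: choosing $ g _h $ so that the crime terms come out exactly as stated, and routing the nonlinearity through $ \left( u _h ^\prime, p _h ^\prime \right) $ and the projection $ P _{ V _h } $ so that its contribution reduces to already-controlled linear-error quantities rather than to the unknown discrete solution.
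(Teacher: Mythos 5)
Your proof is correct and takes essentially the same route as the paper's: introduce an auxiliary solution of the discrete \emph{linear} variational-crimes problem, control its error via \autoref{cor:HoSt2010-3.10}, then observe that this auxiliary triple also solves the discrete \emph{semilinear} problem with perturbed data, and close using the Lipschitz continuity of the discrete semilinear solution operator, the Lipschitz property of $F_h$, and the projection $P_{V_h}$. The only difference is bookkeeping: the paper feeds the auxiliary problem the data $i_h^\ast g = i_h^\ast\left(f - F\left(u+p\right)\right)$, so the data-crime term of \autoref{cor:HoSt2010-3.10} vanishes and the statement's crime term reappears later when the perturbed data are split by a triangle inequality, whereas you feed it $g_h = f_h - F_h P_{V_h}\left(u+p\right)$, so the crime term enters directly through \autoref{cor:HoSt2010-3.10} and the perturbation step reduces immediately to the nonlinearity mismatch $\left\lVert F_h P_{V_h}\left(u+p\right) - F_h\left(u_h^\prime + p_h^\prime\right)\right\rVert_h$; the resulting estimates are term-for-term identical.
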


\begin{proof}
  Suppose $ \left( \sigma _h ^\prime , u _h ^\prime , p _h ^\prime
  \right) \in V _h ^{ k - 1 } \times V _h ^k \times \mathfrak{H} _h ^k
  $ is the solution to the discrete linear problem with right-hand
  side functional $ i _h ^\ast g = i _h ^\ast \left( f - F \left( u +
      p \right) \right) $.  Then, applying
  \autoref{cor:HoSt2010-3.10}, we have
    \begin{multline*}
      \left\lVert \sigma - i _h \sigma _h ^\prime \right\rVert _V +
      \left\lVert u - i _h u _h ^\prime \right\rVert _V + \left\lVert
        p - i _h p _h ^\prime \right\rVert \\
      \leq C \bigl( \inf _{ \tau \in i _h V _h ^{ k - 1 } }
      \left\lVert \sigma - \tau \right\rVert _V + \inf _{ v \in i _h V
        _h ^k } \left\lVert u - v \right\rVert _V + \inf _{ q \in i _h
        V _h ^k } \left\lVert p - q \right\rVert _V + \mu \inf _{ v
        \in i _h V _h
        ^k } \left\lVert P _{ \mathfrak{B} } u - v \right\rVert _V \\
      + \left\lVert I - J _h \right\rVert \left\lVert f - F \left( u + p
        \right) \right\rVert \bigr) .
  \end{multline*}
  Next, observe that $ \left( \sigma _h ^\prime , u _h ^\prime , p _h
    ^\prime \right) $ also solves the discrete semilinear problem with
  right-hand side functional $ f _h ^\prime = i _h ^\ast \left( f - F
    \left( u + p \right) \right) + F _h \left( u _h ^\prime + p _h
    ^\prime \right) $.  Therefore, since the discrete solution
  operator is Lipschitz, we obtain
  \begin{multline*}
    \left\lVert \sigma _h - \sigma _h ^\prime \right\rVert _{ V _h } +
    \left\lVert u _h - u _h ^\prime \right\rVert _{ V _h } +
    \left\lVert p _h - p _h ^\prime \right\rVert _h \\
    \begin{aligned}
      &\leq C \left\lVert i _h ^\ast \left( f - F \left( u + p \right)
        \right) - \left( f _h - F _h \left( u _h ^\prime + p _h
            ^\prime \right) \right) \right\rVert _h \\
      &\leq C \left\lVert i _h ^\ast \left( f - F \left( u + p \right)
        \right) - \left( f _h - F _h P _{ V _h } \left( u + p \right)
        \right) \right\rVert _h \\
      &\qquad + \left\lVert F _h P _{ V _h } \left( u + p \right) - F
        _h \left( u _h ^\prime + p _h ^\prime \right) \right\rVert _h
      .
    \end{aligned}
  \end{multline*} 
  Applying the Lipschitz property of $ F _h $ to the last term of this
  expression,
  \begin{align*}
    \left\lVert F _h P _{ V _h } \left( u + p \right) - F _h \left( u
        _h ^\prime + p _h ^\prime \right) \right\rVert _h &\leq C
    \left( \left\lVert P _{ V _h } u - u _h ^\prime \right\rVert _{ V
        _h } + \left\lVert P _{ V _h } p - p _h ^\prime \right\rVert
      _{ V _h } \right) \\
    &= C \left( \left\lVert P _{ V _h } \left( u - i _h u _h ^\prime
        \right) \right\rVert _{ V _h } + \left\lVert P _{ V _h }
        \left( p - i _h p _h ^\prime \right) \right\rVert _{ V _h }
    \right) \\
    &\leq C \left( \left\lVert u - i _h u _h ^\prime \right\rVert _V +
      \left\lVert p - i _h p _h ^\prime \right\rVert \right),
  \end{align*}
  which we have already controlled.  Hence, an application of the
  triangle inequality completes the proof.
\end{proof}

Clearly, the optimal choice for the functional $ f _h $ and the
operator $ F _h $ would be
\begin{equation*}
  f _h = i _h ^\ast f , \qquad F _h = i _h ^\ast F i _h .
\end{equation*} 
In this case, we would obtain
\begin{multline*}
  \left\lVert i _h ^\ast \left( f - F \left( u + p \right) \right) -
    \left( f _h - F _h P _{ V _h } \left( u + p \right) \right)
  \right\rVert _h \\
  \begin{aligned}
    &= \left\lVert i _h ^\ast \left( F \left( u + p \right) - F i _h P
        _{ V _h } \left( u + p \right) \right) \right\rVert _h \\
    &\leq C \left\lVert \left( I - i _h P _{ V _h } \right) \left( u +
        p \right) \right\rVert _V \\
    &\leq C \left( \inf _{ v \in i _h V _h ^k } \left\lVert u - v
      \right\rVert _V + \inf _{ q \in i _h V _h ^k } \left\lVert p - q
      \right\rVert _V \right) ,
  \end{aligned}
\end{multline*}
which already appears elsewhere in the estimate.  Hence, this choice
of $ f _h $ and $ F _h $ allows the term $ \left\lVert i _h ^\ast
  \left( f - F \left( u + p \right) \right) - \left( f _h - F _h
    \left( P _h u + P _h p \right) \right) \right\rVert _h $ to be
dropped.

However, as noted before, it may not be feasible to take $ f _h = i _h
^\ast f $ or $ F _h = i _h ^\ast F i _h $, since it is often difficult
to compute the adjoint $ i _h ^\ast $ to the inclusion.  Instead,
letting $ \Pi _h \colon W ^k \rightarrow W _h ^k $ be any bounded
linear projection, suppose we choose $ f _h = \Pi _h f $ and $ F _h =
\Pi _h F i _h $, effectively approximating $ i _h ^\ast $ by $ \Pi _h
$.  As in the linear case, this choice will give us good convergence
behavior, contributing an error that is again controlled by other
terms in the error estimate.

\begin{theorem}
  Given a family of linear projections $ \Pi _h \colon W ^k
  \rightarrow W _h ^k $, bounded uniformly with respect to $h$,
  suppose that $ f _h = \Pi _h f $ and $ F _h = \Pi _h F i _h $, where
  $F$ is assumed to be Lipschitz.  Then
  \begin{multline*}
    \left\lVert i _h ^\ast \left( f - F \left( u + p \right) \right) -
      \left( f _h - F _h P _{ V _h } \left( u + p \right) \right)
    \right\rVert _h \leq C \bigl( \left\lVert I - J _h \right\rVert
    \left\lVert f - F
      \left( u + p \right) \right\rVert \\
    + \inf _{ \phi \in i _h W _h ^k } \left\lVert \left( f - F \left(
          u + p \right) \right) - \phi \right\rVert + \inf _{ v \in i
      _h V _h ^k } \left\lVert u - v \right\rVert _V + \inf _{ q \in i
      _h V _h ^k } \left\lVert p - q \right\rVert _V \bigr) .
  \end{multline*}
\end{theorem}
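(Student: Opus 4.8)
The plan is to reduce the quantity to a linear piece, controlled by \autoref{thm:HoSt2010-3.11}, plus a nonlinear perturbation controlled by the Lipschitz property of $F$. First I would abbreviate $ g = f - F \left( u + p \right) $, and observe that, since $ f _h = \Pi _h f $ and $ F _h = \Pi _h F i _h $, the discrete right-hand side collapses to
\begin{equation*}
  f _h - F _h P _{ V _h } \left( u + p \right) = \Pi _h \bigl( f - F
  \bigl( i _h P _{ V _h } \left( u + p \right) \bigr) \bigr) =: \Pi _h
  \tilde{g} .
\end{equation*}
Thus the quantity to be estimated is $ \lVert i _h ^\ast g - \Pi _h \tilde{g} \rVert _h $. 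Adding and subtracting $ \Pi _h g $ and applying the triangle inequality splits this into $ \lVert i _h ^\ast g - \Pi _h g \rVert _h + \lVert \Pi _h ( g - \tilde{g} ) \rVert _h $.

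For the first term, I would apply \autoref{thm:HoSt2010-3.11} directly, with $g$ in place of the generic functional; this yields exactly
\begin{equation*}
  \lVert \Pi _h g - i _h ^\ast g \rVert _h \leq C \bigl( \lVert I - J
  _h \rVert \lVert g \rVert + \inf _{ \phi \in i _h W _h ^k } \lVert g
  - \phi \rVert \bigr) ,
\end{equation*}
which supplies the first two terms of the claimed bound (recalling $ g = f - F ( u + p ) $). For the second term, the uniform boundedness of $ \Pi _h $ gives $ \lVert \Pi _h ( g - \tilde{g} ) \rVert _h \leq C \lVert g - \tilde{g} \rVert $, and since $ g - \tilde{g} = F \bigl( i _h P _{ V _h } ( u + p ) \bigr) - F ( u + p ) $, the Lipschitz continuity of $F$ in the $V$-norm yields $ \lVert g - \tilde{g} \rVert \leq C \lVert ( I - i _h P _{ V _h } ) ( u + p ) \rVert _V $.

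It then remains to observe that $ i _h P _{ V _h } $ is, by definition, the $V$-orthogonal projection onto $ i _h V _h \subset V $, so that $ \lVert ( I - i _h P _{ V _h } ) w \rVert _V = \inf _{ v \in i _h V _h ^k } \lVert w - v \rVert _V $ for any $ w \in V ^k $; splitting $ u + p $ and applying this to $u$ and $p$ separately produces the remaining two infimum terms. The argument is essentially bookkeeping, closely mirroring the linear \autoref{thm:HoSt2010-3.11}; the only point requiring care is the add-and-subtract of $ \Pi _h g $, which cleanly isolates the linear contribution (handled by the earlier theorem) from the nonlinear discrepancy $ g - \tilde{g} $ (handled by Lipschitz continuity), so that no genuinely new difficulty arises.
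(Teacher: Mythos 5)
Your proposal is correct and follows essentially the same argument as the paper: the same add-and-subtract of $\Pi_h g$ (equivalently, the split into $(i_h^\ast - \Pi_h)(f - F(u+p))$ and $\Pi_h(F(u+p) - F i_h P_{V_h}(u+p))$), with \autoref{thm:HoSt2010-3.11} handling the linear term and uniform boundedness of $\Pi_h$, Lipschitz continuity of $F$, and the $V$-orthogonality of $i_h P_{V_h}$ handling the nonlinear discrepancy. No meaningful difference from the paper's proof.
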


\begin{proof}
  We begin by using the triangle inequality to write
  \begin{multline*}
    \left\lVert i _h ^\ast \left( f - F \left( u + p \right) \right) -
      \Pi _h \left( f - F i _h P _{ V _h } \left( u + p \right)
      \right) \right\rVert _h \\
    \leq \left\lVert \left( i _h ^\ast - \Pi _h \right) \left( f - F
        \left( u + p \right) \right) \right\rVert _h + \left\lVert \Pi
      _h \left( F \left( u + p \right) - F i _h P _{ V _h } \left( u +
          p \right) \right) \right\rVert _h .
  \end{multline*}
  For the first term, we can apply \autoref{thm:HoSt2010-3.11} to
  obtain
  \begin{multline*}
    \left\lVert \left( i _h ^\ast - \Pi _h \right) \left( f - F
        \left( u + p \right) \right) \right\rVert _h \\
    \leq C \bigl( \left\lVert I - J _h \right\rVert \left\lVert f - F
      \left( u + p \right) \right\rVert + \inf _{ \phi \in i _h W _h
      ^k } \left\lVert \left( f - F \left( u + p \right) \right) -
      \phi \right\rVert \bigr) .
  \end{multline*} 
  For the remaining term, we have
  \begin{align*}
    \left\lVert \Pi _h \left( F \left( u + p \right) - F i _h P _{ V
          _h } \left( u + p \right) \right) \right\rVert _h &\leq C
    \left\lVert F \left( u + p \right) - F i _h P _{ V _h } \left( u +
        p \right) \right\rVert \\
    &\leq C \left\lVert \left( I - i _h P _{ V _h } \right) \left( u +
        p \right) \right\rVert _V \\
    &\leq C \left( \inf _{ v \in i _h V _h ^k } \left\lVert u - v
      \right\rVert _V + \inf _{ q \in i _h V _h ^k } \left\lVert p - q
      \right\rVert _V \right) ,
  \end{align*}
which completes the proof.
\end{proof}

Hence, we again get convergence of the discrete solution to the
continuous solution, as long as the discrete complex is
well-approximating and $ \left\lVert I - J _h \right\rVert \rightarrow
0 $ as $ h \rightarrow 0 $.

\subsection{Remarks on relaxing the Lipschitz assumption}
\label{sec:localLipschitz}
Our \emph{a priori} estimates for the mixed semilinear problem
depended, crucially, on the assumption that the monotone operator $F$
was not merely hemicontinuous but Lipschitz.  In many problems of
interest, however, $F$ may be only \emph{locally} Lipschitz: that is,
given $\mathbf{u} \in V ^k $, there exist constants $ C, M > 0 $
(possibly depending on $\mathbf{u}$) such that $ \left\lVert F
  \mathbf{u} - F \mathbf{u} ^\prime \right\rVert \leq C \left\lVert
  \mathbf{u} - \mathbf{u} ^\prime \right\rVert _V $ whenever $
\left\lVert \mathbf{u} - \mathbf{u} ^\prime \right\rVert _V \leq M $.
What can we say about well-posedness and convergence when the
Lipschitz condition is only local rather than global?

Since \autoref{thm:wellPosed} requires only the hemicontinuity of $F$,
we still know that the semilinear problem has a unique solution, and
that it satisfies
\begin{equation*}
  \left\lVert \mathbf{u} - \mathbf{u} ^\prime \right\rVert _{ V \cap V
    ^\ast } \leq \left\lVert \mathbf{K} \right\rVert \left\lVert f - f ^\prime
  \right\rVert .
\end{equation*} 
For the mixed problem, though, all we can show is that
\begin{equation*}
  \left\lVert \sigma - \sigma ^\prime \right\rVert _V + \left\lVert u
    - u ^\prime \right\rVert _V + \left\lVert p - p ^\prime
  \right\rVert \leq C \left( \left\lVert f - f ^\prime \right\rVert +
    \left\lVert F \mathbf{u} - F \mathbf{u} ^\prime \right\rVert
  \right) ,
\end{equation*} 
at which point the proof of \autoref{thm:mixedWellPosed} requires the
Lipschitz condition to continue.  However, if $F$ is locally Lipschitz
at $\mathbf{u}$, then we can still proceed to obtain
\begin{equation*}
  \left\lVert \sigma - \sigma ^\prime \right\rVert _V + \left\lVert u
    - u ^\prime \right\rVert _V + \left\lVert p - p ^\prime
  \right\rVert \leq C \left\lVert f - f ^\prime \right\rVert,
\end{equation*} 
as long as $ \left\lVert f - f ^\prime \right\rVert $ (and therefore $
\left\lVert \mathbf{u} - \mathbf{u} ^\prime \right\rVert _V $) is
sufficiently small.  The same holds true for the well-posedness of the
discrete mixed problem on $ V _h $.

Now, let us observe how this affects the convergence of the discrete
problem.  In the proof of the \emph{a priori} estimate,
\autoref{thm:errorEstimate}, we had 
\begin{equation*}
  \left\lVert f - f ^\prime \right\rVert = \left\lVert F \left( u +
      p \right) - F \left( u _h ^\prime + p _h ^\prime \right)
  \right\rVert ,
\end{equation*} 
where $ \left( \sigma _h ^\prime , u _h ^\prime , p _h ^\prime \right)
$ is the solution to the discrete linear problem with right-hand side
functional $ g = f - F \left( u + p \right) $.  If $ V _h $ is
well-approximating in $V$, then \autoref{thm:subcomplex} imples that,
by taking $h$ sufficiently small, we can get $ \left\lVert f - f
  ^\prime \right\rVert $ to be as small as we want.  Therefore, the
error estimates hold as long as $h$ is sufficiently small.

As an example of how these Lipschitz conditions arise, consider the
following semilinear elliptic problem on a smooth, connected, open
domain $\Omega \subset \mathbb{R}^n$: Find $u \in \mathring{H} ^1
(\Omega)$ such that
\begin{equation}
   \label{eqn:semilinear}
  - \Delta  u + u ^m = f ,
\end{equation}
where $ m \geq 1 $ is an odd integer. Since $ L = - \Delta $ is the
Hodge--Laplace operator for the $ L ^2 $-de~Rham complex when $ k = 0
$, this problem can be expressed within our semilinear framework by
taking $ F u = u ^m $.  While $F$ is monotone (since $m$ is odd), it
does not appear to be globally Lipschitz when $ m > 1 $, since the
inequality
\begin{equation}
   \label{eqn:lipschitz}
  \left\lVert F u - F u ^\prime  \right\rVert _Y \leq C \left\lVert u
    - u ^\prime \right\rVert _X , \quad \forall u , u ^\prime \in X ,
\end{equation} 
cannot be shown to hold for any reasonable choice of the spaces $X$
and $Y$.

However, for semilinear scalar problems where both continuous and
discrete maximum principles are available, it is possible to establish
\emph{a priori} $L^{\infty}$ estimates on the continuous and discrete
solutions.  These estimates ensure that the solutions both lie in an
\emph{order interval} $ \left[ u_-,u_+ \right] \cap
\mathring{H}^1(\Omega) $ within the solution space.  In other words,
if $u$ and $u_h$ are the continuous and discrete solutions of the
semilinear problem~\eqref{eqn:semilinear}, then they satisfy
\begin{equation*} 
u_- \le u, u_h \le u_+.
\end{equation*} 
This pointwise control makes it possible to
establish~\eqref{eqn:lipschitz} in this order interval, where $X =
\mathring{H} ^1 \left( \Omega \right) $ and $Y=L^2(\Omega)$.  This is
precisely the Lipschitz condition that we need to apply the framework
developed in this paper.  In fact, even exponential-type
nonlinearities can be shown to satisfy the
condition~\eqref{eqn:lipschitz} at the continous and discrete
solutions; see, for example, \citep{ChHoXu2007}.  For a discussion of
these and related techniques for semilinear problems,
see~\citep{StHo2011}.

While pointwise control of the continuous solution
to~\eqref{eqn:semilinear} is always available, due to the maximum
principle property of the Laplacian, pointwise control of the discrete
solution is in fact a much more delicate property.  Typically, this
requires placing restrictive angle conditions on the mesh underlying
the finite element space.  In two spatial dimensions, the angle
conditions necessary to preserve the maximum principle property are
achievable with careful mesh generation, even when local mesh
refinement algorithms in are use.  However, in three spatial
dimensions, it is very difficult to satisfy the required angle
conditions, even on quasi-uniform meshes.

Nevertheless, in the case of sub-critical and critical-type polynomial
nonlinearities, it is possible to establish a \emph{local} type of
Lipschitz condition by relying only on pointwise control of the
continuous solution, without requiring pointwise control of the
discrete solution, and thus avoiding the need for mesh conditions
altogether.  For this class of nonlinearities, one can obtain the
following local Lipschitz result.
\begin{theorem}
\label{thm:nonlinear}
Let $\Omega \subset \mathbb{R}^n$ for $n \ge 2$, and assume that
$\|u\|_{L^{\infty}(\Omega)} <\infty$.  Let $F\colon
\mathring{H}^1(\Omega)\to H^{-1}(\Omega)$ be a polynomial in $u$ with
measurable coefficients defined on $\Omega$, and whose polynomial
degree $m$ satisfies $1 \leq m <\infty$ for $n=2$ and $1 \leq m \leq
\overline{m} = (n+2)/(n-2)$ for $n>2$.  Assume also that $u,u' \in
\mathring{H}^1(\Omega)$, and that $\| u -
u'\|_{\mathring{H}^1(\Omega)} \le M$ for some finite constant $M$.
Then
\begin{equation*}
  \| Fu - Fu' \|_{H^{-1}(\Omega)}
  \le C \| u - u' \|_{\mathring{H}^1(\Omega)},
\end{equation*}
where $C=C(\Omega,F,\|u\|_{L^{\infty}(\Omega)},n,m,M)$.
\end{theorem}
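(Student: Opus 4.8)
The plan is to use the $H^{-1}$--$\mathring{H}^1$ duality to reduce the estimate to a pairing controlled by H\"older's inequality together with the Sobolev embedding. Writing $w = u - u'$, it suffices by definition of the dual norm to bound $\int_\Omega (Fu - Fu')\,\phi\,dx$ uniformly over $\phi \in \mathring{H}^1(\Omega)$ with $\|\phi\|_{\mathring{H}^1(\Omega)} = 1$. Since $F$ is a polynomial $Fu = \sum_{j=0}^m a_j(x)\,u^j$ with measurable (and, for the estimate, essentially bounded) coefficients, the triangle inequality reduces matters to a single monomial $a_j(x)\,u^j$; the constant term cancels and the binding growth constraint comes from the top degree $j = m$. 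The algebraic identity $u^j - (u')^j = w\sum_{i=0}^{j-1} u^i (u')^{j-1-i}$ then gives the pointwise bound $|u^j - (u')^j| \le C\,|w|\,(|u| + |u'|)^{j-1}$.

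The key step is the decomposition $(|u|+|u'|)^{j-1} \le C\big(\|u\|_{L^\infty(\Omega)}^{j-1} + |w|^{j-1}\big)$, obtained from $|u'| \le |u| + |w|$ and convexity, which separates the contribution into a term controlled by the $L^\infty$ bound on $u$ and a purely nonlinear term in $w$. For the first, $\int_\Omega |w|\,\|u\|_\infty^{j-1}\,|\phi| \le \|u\|_\infty^{j-1}\|w\|_{L^2}\|\phi\|_{L^2} \le C\|w\|_{\mathring{H}^1}\|\phi\|_{\mathring{H}^1}$ directly, using $\mathring{H}^1 \hookrightarrow L^2$. For the second, I would estimate $\int_\Omega |w|^{j}\,|\phi|$ by H\"older, placing each of the $j$ factors of $w$ in $L^{2^{\ast}}$ and $\phi$ in $L^p$ with $\tfrac{j}{2^{\ast}} + \tfrac1p = 1$, where $2^{\ast} = 2n/(n-2)$; then the Sobolev embedding $\mathring{H}^1 \hookrightarrow L^{2^{\ast}}$ (and $L^{2^{\ast}}\hookrightarrow L^p$ on the finite-measure domain when $p<2^{\ast}$) yields $\int_\Omega |w|^j|\phi| \le C\|w\|_{\mathring{H}^1}^{j}\|\phi\|_{\mathring{H}^1}$.

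The crucial point is that this H\"older--Sobolev bookkeeping closes precisely under the stated growth restriction: the requirement $p \le 2^{\ast}$ is equivalent to $j + 1 \le 2^{\ast} = 2n/(n-2)$, i.e.\ $j \le (n+2)/(n-2) = \overline{m}$, which is exactly the critical exponent in the hypothesis (with the critical case $j = \overline{m}$ forcing $p = 2^{\ast}$ exactly). For $n = 2$ the embedding $\mathring{H}^1 \hookrightarrow L^p$ holds for every finite $p$, which is why any polynomial degree is admissible. Finally, to convert the degree-$j$ term into something \emph{linear} in $w$, as the statement demands, I would absorb the excess powers using the a priori bound $\|w\|_{\mathring{H}^1}\le M$, writing $\|w\|_{\mathring{H}^1}^{j} \le M^{j-1}\|w\|_{\mathring{H}^1}$; this is what introduces the dependence of $C$ on $M$. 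Summing the monomial estimates over $0 \le j \le m$ and taking the supremum over $\phi$ gives the claim with $C = C(\Omega, F, \|u\|_{L^\infty(\Omega)}, n, m, M)$.

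I expect the main obstacle to be purely the exponent bookkeeping in the critical case $m = \overline{m}$, where no slack is left in the Sobolev embedding and one must land exactly on $L^{2^{\ast}}$ for both $w$ and $\phi$; the subcritical and $n=2$ cases then follow a fortiori using the finiteness of $|\Omega|$. A secondary technical point is ensuring the coefficients $a_j$ carry enough integrability (e.g.\ $a_j \in L^\infty(\Omega)$) for the pairings to be well defined, which is subsumed into the dependence of $C$ on $F$.
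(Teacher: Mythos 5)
Your proposal is correct, but there is nothing in the paper to compare it against: the paper's entire ``proof'' of this theorem is the citation ``See~\cite{BHSZ11a}'' (Bank--Holst--Szypowski--Zhu, listed as \emph{in preparation}), so the argument is deferred to an external reference rather than given in the text. Your blind reconstruction is the standard critical-exponent argument and it closes correctly: duality against $\phi \in \mathring{H}^1$ with $\|\phi\|_{\mathring{H}^1}=1$, reduction to monomials, the factorization $u^j-(u')^j = w\sum_i u^i(u')^{j-1-i}$, and---this is the essential step, since only $u$ (not $u'$) carries an $L^\infty$ bound---the splitting $(|u|+|u'|)^{j-1}\le C\bigl(\|u\|_{L^\infty}^{j-1}+|w|^{j-1}\bigr)$ via $|u'|\le|u|+|w|$, after which the pure power $\int|w|^j|\phi|$ is handled by H\"older with $j/2^{\ast}+1/p=1$ and the Sobolev embedding; your verification that $p\le 2^{\ast}$ is exactly $j\le(n+2)/(n-2)$ is the right bookkeeping, and absorbing $\|w\|_{\mathring{H}^1}^{j-1}\le M^{j-1}$ is precisely where the local (rather than global) Lipschitz character and the $M$-dependence of $C$ enter. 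Two caveats, both of which you correctly flag or which are implicit in the theorem statement itself: the coefficients must be taken in $L^\infty(\Omega)$ (bare measurability is not enough; the statement hides this in the dependence of $C$ on $F$), and the Poincar\'e inequality plus the nested embedding $L^{2^{\ast}}\hookrightarrow L^p$ for $p<2^{\ast}$ require $\Omega$ bounded (or of finite measure), consistent with the bounded smooth domains assumed in the surrounding discussion of problem~\eqref{eqn:semilinear}.
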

\begin{proof}
See~\cite{BHSZ11a}.
\end{proof}

We note that the result in \autoref{thm:nonlinear} has a slightly
different form than that considered above, since $ F \colon \mathring{
  H } ^1 (\Omega) \rightarrow H ^{-1} (\Omega) $ rather than $
\mathring{ H } ^1 (\Omega) \rightarrow L ^2 (\Omega) $.  In the
language of Hilbert complexes, that is, the codomain is given by the
dual to $ V ^k $ instead of $ W ^k $.  However, as remarked by
\citet[p.~305]{ArFaWi2010}, the estimates of finite element exterior
calculus also apply when the data is given weakly as $ f \in \left( V
  ^k \right) ^\ast $, equipped with the sup-norm, and the analysis
does not change substantially from the $ f \in W ^k $ case (although
the solution can no longer be interpreted as giving the Hodge
decomposition of $f$ in a strong sense).  Likewise, the results
presented here for the semilinear problem also extend to the case of
weakly-specified data, since the tools of monotone operator theory and
abstract Hammerstein equations carry over without any significant
modification (other than the appearance of the sup-norm in place of
the $W$-norm, where appropriate).

Finally, many important problems contain nonlinearities satisfying the
assumptions needed to establish continuous and discrete pointwise
control, either by satisfying mesh conditions or by
\autoref{thm:nonlinear}.  In particular, these examples include the
Yamabe problem arising in geometric analysis, and the Hamiltonian
constraint equation in general relativity.  For the three-dimensional
case, the leading nonlinear terms for both of these problems have the
form
\begin{equation*} 
Fu = a u^5 + b u,
\end{equation*} 
where $a, b \in L^{\infty}(\Omega)$.  Since $ m = 5 $ equals the
critical exponent $ \overline{m} = (n+2)/(n-2) $ when $ n = 3 $, the
nonlinearity satisfies the hypotheses of \autoref{thm:nonlinear}.
See~\cite{HoNaTs2009} for the derivation of pointwise bounds for both
problems, using maximum principles.

\section{Conclusion}

In this article, we have extended the abstract Hilbert complex
framework of \citet{ArFaWi2010}, as well as our previous analysis of
variational crimes from \citet{HoSt2010}, to a class of semilinear
mixed variational problems.  Our approach used an equivalent
formulation of these problems as abstract Hammerstein equations,
enabling us to apply the tools of nonlinear functional analysis and
monotone operator theory, and to obtain well-posedness results for
both continuous and discrete semilinear problems.  Additional
continuity assumptions on the nonlinearity yielded a stronger
well-posedness result for mixed problems, as well as \emph{a priori}
error bounds for the discrete solution.  Despite the addition of
nonlinear terms, this result agrees with the quasi-optimal estimate of
\citet{ArFaWi2010} for the linear case, and similarly allows for
improved estimates to be obtained under additional compactness and
continuity assumptions.  Likewise, in extending the variational crimes
analysis in \citep{HoSt2010} to semilinear problems, we obtain
convergence results agreeing with the linear case.  These last results
can also be used to extend the \emph{a priori} estimates for Galerkin
solutions to the Laplace--Beltrami equation on approximate $2$- and
$3$-hypersurfaces, due to \citet{Dziuk1988} and \citet{Demlow2009}, to
the larger class of semilinear problems involving the Hodge Laplacian
on hypersurfaces of arbitrary dimension.

At the conclusion of \citet{HoSt2010}, several open problems are
mentioned, including the extension of the Hilbert complex framework to
more general Banach complexes.  While the Hilbert complex framework
was again sufficient for the analysis of semilinear problems presented
here, Banach spaces become necessary when dealing with more general
nonlinear problems.  Banach complexes appear to lack much of the
crucial structure of Hilbert complexes, particularly the Hodge
decomposition, whose orthogonality depends fundamentally on the
presence of an inner product.  However, if there is additional
structure present in a Banach complex, such as a Gelfand-like triple
structure (e.g., $W \subset H \subset W ^\ast$, where $H$ is a Hilbert
complex), then it may be possible to generalize the approach taken
here.

\begin{acknowledgments}
  M.~H.~was supported in part by NSF DMS/CM Awards 0715146 and 0915220,
  NSF MRI Award 0821816, NSF PHY/PFC Award 0822283, and by DOD/DTRA
  Award HDTRA-09-1-0036.

  A.~S.~was supported in part by NSF DMS/CM Award 0715146 and by NSF
  PHY/PFC Award 0822283, as well as by NIH, HHMI, CTBP, and NBCR.
\end{acknowledgments}

\end{document}